\DeclareMathAlphabet{\pazocal}{OMS}{zplm}{m}{n}
\pgfplotsset{compat=1.14}
\def\NN{\mathbb{N}}
\def\ZZ{\mathbb{Z}}
\def\ZZ{\mathbb{Z}}
 \newtheorem{defi}{\textbf{Definition}}[section]
 \newtheorem{teo}[defi]{\textbf{Theorem}}
 \newtheorem{cor}[defi]{\textbf{Corollary}}
 \newtheorem{prop}[defi]{\textbf{Proposition}}
 \newtheorem{lem}[defi]{\textbf{Lemma}}
 \newtheorem{rem}[defi]{\textbf{Remark}}
  \newtheorem{example}[defi]{\textbf{Example}}
\newcommand{\dbl}{[\hspace{-0.2ex}[}
\newcommand{\dbr}{]\hspace{-0.2ex}]}
\newcommand{\db}[1]{\dbl {#1} \dbr}
\newcommand{\res}[1]{\hspace{-0.6mm}\downarrow_{\hspace{-0.25mm}{#1}}}
\newcommand{\iso}{\cong}
\newcommand{\tn}[1]{\textnormal{#1}}
\begin{document}

\title{Blocks of profinite groups with cyclic defect group}

 \author[1]{Ricardo J. Franquiz Flores}
\author[2]{John W. MacQuarrie}
\affil[1]{Universidade Federal de Minas Gerais, {{rjoel18@gmail.com}}}
\affil[2]{Universidade Federal de Minas Gerais, {{john@mat.ufmg.br}}}

\maketitle
\setcounter{page}{1}

\begin{abstract}
	We demonstrate that the blocks of a profinite group whose defect groups are cyclic have a Brauer tree algebra structure analogous to the case of finite groups.  We show further that the Brauer tree of a block with defect group $\ZZ_{p}$ is of star type.
\end{abstract}


\section{Introduction}

The modular representation theory of a finite group $G$ may loosely be described as the study of the category of $kG$-modules and their relationship with the group $G$, where $k$ for us will be an algebraically closed field whose characteristic $p$ divides the order of $G$.  A simple but effective approach to modular representation theory is to write $kG$ as a direct product of indecomposable algebras, known in the theory as the blocks of $G$, and study the representation theory of each block separately.  The difficulty of a given block $B$ may be measured by a $p$-subgroup of $G$, called the defect group of $B$.  The only general family of defect groups whose corresponding blocks are completely understood, is the class of cyclic $p$-groups.  Blocks with cyclic defect group have an explicit description as so-called ``Brauer tree algebras''.  For a clear and encyclopedic discussion of the block theory of finite groups we recommend \cite{lin1}, \cite{lin2}. 

The study of the modular representation theory of profinite groups was begun in \cite{J}, \cite{J1}, while the study of blocks and defect groups has been initiated recently in \cite{JR}.  In this article we classify, in Theorem \ref{teo:BraTreeAlg}, the blocks of an arbitrary profinite group whose defect groups are cyclic (meaning either a finite cyclic $p$-group or the $p$-adic integers $\mathbb{Z}_p$): they are Brauer tree algebras in strict analogy with the finite case.  Our approach is to use limit arguments and invoke the corresponding theory for finite groups, thus avoiding explicit mention of the most technical arguments required for finite groups.  The results are quite striking: a block of the profinite group $G$ with finite cyclic defect group has finite dimension (Theorem \ref{teo:grpdeffin}) and is thus a block of a finite quotient of $G$ -- the blocks of profinite groups with finite cyclic defect group are thus precisely the blocks of finite groups with cyclic defect group.  Meanwhile, a block with defect group $\mathbb{Z}_p$ has a very simple Brauer tree, of so-called ``star type'' (Theorem \ref{teo:blocktreestar}) -- the blocks with defect group $\mathbb{Z}_p$ are thus well behaved as algebras, even by the standard of blocks of cyclic defect group (Corollary \ref{cor:props of blocks with def Zp}).

\section*{Acknowledgements:}\label{ackref}
We thank the referee for an attentive reading of the text, which has improved the exposition.

\section{Preliminaries}

Let $k$ be a field of characteristic $p$, regarded as a discrete topological ring.  In what follows, when the coefficient ring is unspecified it is assumed to be $k$, so for instance ``algebra" means ``$k$-algebra".  Unless specified otherwise, modules are topological left modules. 

\begin{defi}\label{def:pseudocompactalg}
A \emph{pseudocompact algebra} is an associative, unital, Hausdorff topological $k$-algebra $A$ possessing a basis of open neighbourhoods of $0$ consisting of open ideals $I$ having cofinite dimension in $A$ that intersect in $0$ and such that $A=\varprojlim_{I}A/I$.
\end{defi}

Equivalently, a pseudocompact algebra is an inverse limit of discrete finite dimensional algebras in the category of topological algebras.   For a general introduction to pseudocompact objects, see \cite{Bru}. If $G$ is a profinite group, an inverse system of finite continuous quotients $G/N$ of $G$ induces an inverse system of finite dimensional algebras $k[G/N]$, whose inverse limit $k\db{G}$, the completed group algebra of $G$, is a pseudocompact algebra.

\begin{defi}\label{def:pseudocompactmod}
	If $A$ is a pseudocompact algebra, a \emph{pseudocompact $A$-module} is a topological $A$-module $U$ possessing a basis of open neighbourhoods of $0$ consisting of open submodules $V$ of finite codimension that intersect in $0$ and such that $U=\varprojlim_{V}U/V$. \end{defi}

The category of pseudocompact modules for a pseudocompact algebra is abelian and has exact inverse limits.

\subsection{Induction, restriction, homomorphisms and coinvariants}

The following definitions make use of the \emph{completed tensor product} (cf. \cite[\S 2]{Bru}). Let $G$ be a profinite group and $H$ a closed subgroup of $G$. If $V$ is a pseudocompact $k\db{H}$-module, then the \emph{induced $k\db{G}$-module} is defined by $V\uparrow^{G}=k\db{G}\widehat{\otimes}_{k\db{H}}V$, with action from $G$ on the left factor. If $U$ is a $k\db{G}$-module, then the \emph{restricted $k\db{H}$-module} $U\downarrow_{H}$ is the original $k\db{G}$-module $U$ with coefficients restricted to the subalgebra $k\db{H}$ (cf.\ \cite[\S 2.2]{PPJ}).

Let $N$ be a closed normal subgroup of $G$ and $U$ a pseudocompact $k\db{G}$-module. The module of $N$-\emph{coinvariants} $U_{N}$, is defined as $k\widehat{\otimes}_{k\db{N}}U\cong U/I_{N}U$, where $I_{N}$ denotes the kernel of the continuous map $k\db{N}\to k$ that sends $n$ to $1$ for each $n\in N$. The action of $G$ on $U_{N}$ is given by $g(\lambda\widehat{\otimes}u)=\lambda\widehat{\otimes}gu$. Observe that $U_N$ is naturally a $k\db{G/N}$-module.

The technical properties  we require of coinvariant modules (originally stated for profinite modules in \cite[\S 2]{J}) can be found in \cite[\S 2.3]{JR}.  We mention just one explicitly, wherein the notation $N\trianglelefteq_{O}G$ indicates that $N$ is an open normal subgroup of $G$:

\begin{prop}[{\cite[Proposition 2.7]{JR}}]\label{prop:moduleasinvslimofcoinvar}
	If $U$ is a pseudocompact $k\db{G}$-module, then $\{U_{N}\ :\ N\trianglelefteq_{O}G\}$ together with the canonical quotient maps $\varphi_{MN}:U_{N}\to U_{M}$ whenever $N\leq M$, forms a surjective inverse system with inverse limit $U$.
\end{prop}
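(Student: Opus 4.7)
The plan is to verify in turn that the family $\{U_{N}\}_{N\trianglelefteq_{O}G}$ is an inverse system, that the connecting maps are surjective, and finally that its limit is canonically $U$. The real work is in the last step, where I must show that the submodules $\overline{I_{N}U}$ form a cofinal family inside the lattice of open submodules of $U$.

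First I would check that the maps $\varphi_{MN}:U_{N}\to U_{M}$ are well-defined and form an inverse system. Whenever $N\leq M$ are open normal in $G$, every element $n-1$ with $n\in N$ lies in $I_{M}\subseteq k\db{M}\subseteq k\db{G}$ (since $n\in M$), and taking closures gives $\overline{I_{N}U}\subseteq \overline{I_{M}U}$. Thus the identity on $U$ descends to a continuous $k\db{G/M}$-linear surjection $\varphi_{MN}:U_{N}\twoheadrightarrow U_{M}$; the equality $\varphi_{LM}\circ\varphi_{MN}=\varphi_{LN}$ for $N\leq M\leq L$ is automatic. Since each $U_{N}$ is a continuous quotient of $U$, the canonical maps $U\to U_{N}$ assemble into a continuous $k\db{G}$-linear map
$$\Phi:U\longrightarrow \varprojlim_{N\trianglelefteq_{O}G}U_{N}.$$

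Next I would show that $\Phi$ is an isomorphism. The key claim is cofinality: for every open submodule $V\leq U$, there exists $N\trianglelefteq_{O}G$ with $\overline{I_{N}U}\subseteq V$. This uses the pseudocompact hypothesis: $U/V$ is discrete and, by containing a member of the finite-codimensional basis of neighbourhoods of $0$, finite-dimensional over $k$. Hence the continuous action $G\times U/V\to U/V$ factors through a finite quotient $G/N$ for some open normal $N$. For $n\in N$ this yields $(n-1)U\subseteq V$; taking the closed $k\db{G}$-submodule generated by all such elements inside the closed submodule $V$ gives $\overline{I_{N}U}\subseteq V$, as required.

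With cofinality in hand, the rest is formal. Because $\{V:V\leq_{O}U\}$ is cofinal with $\{\overline{I_{N}U}:N\trianglelefteq_{O}G\}$, the defining property of pseudocompactness gives
$$U\;=\;\varprojlim_{V\leq_{O}U}U/V\;\cong\;\varprojlim_{N\trianglelefteq_{O}G}U/\overline{I_{N}U}\;=\;\varprojlim_{N\trianglelefteq_{O}G}U_{N},$$
and this identification is precisely $\Phi$. The hardest point will be that last cofinality argument: one must be comfortable moving between the closed submodule $\overline{I_{N}U}$ (which is what appears inside $U_{N}=k\ctens_{k\db{N}}U$) and the naive set $\{(n-1)u:n\in N,\,u\in U\}$, which is why the continuity/closedness of $V$ and the finite-dimensionality of $U/V$ are the pivotal ingredients.
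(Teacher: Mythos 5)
This proposition is quoted from \cite[Proposition 2.7]{JR}; the present paper does not reproduce a proof, so there is no in-text argument to compare against. On its own merits, your proof has the right skeleton and the central idea (the action of $G$ on each finite-dimensional discrete quotient $U/V$ factors through an open normal subgroup, whence $\overline{I_{N}U}\subseteq V$) is correct and is the heart of the matter.

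The one place where you need to be more careful is the final passage from cofinality to the isomorphism. You write that $\{V:V\leq_{O}U\}$ ``is cofinal with'' $\{\overline{I_{N}U}\}$, but in general these two families are \emph{not} mutually cofinal: while every open $V$ contains some $\overline{I_{N}U}$, a submodule of the form $\overline{I_{N}U}$ need not be open (take $U=\prod_{\mathbb{N}}k$ with trivial $G$-action, where $\overline{I_{N}U}=0$ for every $N$), so it need not contain any open $V$. Replacing one directed indexing family by another in an inverse limit is automatic only under \emph{mutual} cofinality, so the displayed chain $\varprojlim_{V}U/V\cong\varprojlim_{N}U/\overline{I_{N}U}$ does not follow from your cofinality claim alone. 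The conclusion is nevertheless true, but it needs the extra input that $U$ is Hausdorff and complete in the linear topology: given $(u_{N})\in\varprojlim_{N}U_{N}$, one lifts each $u_{N}$ to $\tilde u_{N}\in U$, observes that the closed cosets $\tilde u_{N}+\overline{I_{N}U}$ form a downward-directed family of nonempty closed sets, and uses linear compactness of $U$ (equivalently, completeness plus the fact that every closed submodule is the intersection of the open submodules containing it) to produce $u\in\bigcap_{N}(\tilde u_{N}+\overline{I_{N}U})$, which is the desired preimage; injectivity follows from $\bigcap_{N}\overline{I_{N}U}\subseteq\bigcap_{V}V=0$, which your refinement claim does give. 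So: correct approach, but the ``cofinal'' step should be replaced by (or expanded to) this linear-compactness argument rather than a bare change of index set.
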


Note that if $U$ is finitely generated and $N$ is open, then $U_N$ is a finite dimensional $k[G/N]$-module.

\begin{rem}\label{obs:invsystemforcoinvariants}
	Throughout this text, whenever $N\leq M$ are closed normal subgroups of the profinite group $G$, the notations $\varphi_{MN}:U_N\to U_M,\ \varphi_{N}: U\to U_N$ will be reserved exclusively for the canonical maps between coinvariant modules. In the special case of $k\db{G}$, we have $k\db{G}_{N}= k\db{G/N}$ and the corresponding maps $\varphi_{MN}, \varphi_{N}$ are homomorphisms of algebras.
\end{rem}

In the following lemma the most obvious choice of $W_N$ is simply $\varphi_N(V)$, but allowing more general $W_N$ will be helpful later on.

\begin{lem}\label{lem:quotientpseudocompactmodule}
	Let $U$ be a pseudocompact $k\db{G}$-module and $V$  a closed submodule of $U$. Suppose that for each open normal subgroup $N$ of $G$, $U_{N}$ has a submodule $W_{N}$ such that
	\begin{itemize}
		\item $\varphi_{N}(V)\subseteq W_{N}$ for each $N$,
		\item $\varphi_{MN}(W_{N})\subseteq W_{M}$ whenever $N\leq M$
		\item  $\varprojlim_{N}W_{N}=V$.
	\end{itemize}
	Then $U/V\cong\varprojlim\limits_{N}U_{N}/W_{N}$.
\end{lem}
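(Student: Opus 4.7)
The plan is to realize $U/V$ as the inverse limit of the quotients $U_N/W_N$ by invoking the exactness of the inverse limit functor on the abelian category of pseudocompact modules, as recalled immediately after Definition \ref{def:pseudocompactmod}.

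First, I would check that the three hypotheses assemble into a coherent inverse system of short exact sequences
\begin{equation*}
0 \to W_N \to U_N \to U_N/W_N \to 0
\end{equation*}
indexed by $N \trianglelefteq_O G$. Indeed, the second hypothesis $\varphi_{MN}(W_N) \subseteq W_M$ ensures that the canonical transition maps $\varphi_{MN}\colon U_N \to U_M$ furnished by Proposition \ref{prop:moduleasinvslimofcoinvar} restrict to maps $W_N \to W_M$ and descend to well-defined maps $\overline{\varphi}_{MN}\colon U_N/W_N \to U_M/W_M$, giving a morphism of inverse systems of short exact sequences. Applying $\varprojlim_N$ then yields a short exact sequence
\begin{equation*}
0 \to \varprojlim_N W_N \to \varprojlim_N U_N \to \varprojlim_N U_N/W_N \to 0.
\end{equation*}
By Proposition \ref{prop:moduleasinvslimofcoinvar} the middle term is $U$, and by the third hypothesis the left term is $V$. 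Since the first hypothesis $\varphi_N(V) \subseteq W_N$ ensures that the composition $U \to U_N \to U_N/W_N$ annihilates $V$ for every $N$, the induced surjection $U \to \varprojlim_N U_N/W_N$ factors through $U/V$ with kernel $V$, giving the desired isomorphism $U/V \cong \varprojlim_N U_N/W_N$.

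The only point requiring care is that the $W_N$, and hence the quotients $U_N/W_N$, must live in the pseudocompact category for the exactness statement to apply. This forces one to read ``submodule'' in the hypotheses as ``closed submodule''; under that reading $U_N/W_N$ is Hausdorff and pseudocompact, and no further issue arises. (This is automatic in the application we have in mind, since when $U$ is finitely generated and $N$ is open, $U_N$ is finite dimensional and every submodule is closed.) Beyond this bookkeeping, the argument is formal and the inverse-limit exactness does the real work.
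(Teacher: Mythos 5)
Your proof is correct and uses the same underlying facts as the paper, but packages them differently. The paper's proof constructs the natural map $U\to\varprojlim_N U_N/W_N$ directly, invokes one citation to show it is surjective, identifies the kernel by hand as $\bigcap_N\varphi_N^{-1}(W_N)=\varprojlim_N W_N=V$, and then appeals to a second, purely topological result to upgrade the induced continuous bijection $U/V\to\varprojlim_N U_N/W_N$ to an isomorphism. You instead form the inverse system of short exact sequences $0\to W_N\to U_N\to U_N/W_N\to 0$ and apply the exactness of $\varprojlim$ on the pseudocompact category in a single stroke, which also hands you the topological isomorphism for free. The net effect is the same; the only real difference is that the paper's route never needs to regard the $W_N$ as an inverse system of pseudocompact modules per se (it only uses the subsets $\varphi_N^{-1}(W_N)$ of $U$), so it is marginally more agnostic about whether the $W_N$ are closed. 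Your remark on closedness is well taken -- to form a short exact sequence of pseudocompact modules one does need the $W_N$ (and hence $U_N/W_N$) to be pseudocompact, and in all of the paper's uses of this lemma the modules $U_N$ are finite dimensional so this is automatic. (One small stylistic note: once you have the exact sequence $0\to V\to U\to\varprojlim_N U_N/W_N\to 0$, the final paragraph re-deriving the factorization through $U/V$ is redundant; the isomorphism is already an immediate consequence of exactness.)
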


\begin{proof}
	The conditions imply that the modules $U_N/W_N$ form an inverse system in the obvious way, and that we have a natural map $U\to \varprojlim_N U_N/W_N$.  It is surjective by \cite[IV. \S 3, Lemma 1]{G} and its kernel is $\bigcap_{N}\varphi_{N}^{-1}(W_{N}) = V$.  The induced continuous bijection $U/V\to \varprojlim_N U_N/W_N$ is an isomorphism by \cite[II, \S6, 27.]{L}. 
\end{proof}

Let $A$ be a pseudocompact algebra. If $U$ and $W$ are topological $A$-modules, then $\tn{Hom}_{A}(U,W)$ denotes the topological $k$-vector space of continuous $A$-module homomorphisms from $U$ to $W$ with the compact-open topology. If $U,W$ are pseudocompact, and $W=\varprojlim_{i}W_{i}$, then $\tn{Hom}_{A}(U,W)=\varprojlim_{i}\tn{Hom}_{A}(U,W_{i})$. In particular, when $U$ is finitely generated as an $A$-module, then $\tn{Hom}_{A}(U,W)$ is a pseudocompact vector space. For more details see \cite[§2.2]{PPJ}. When $A=k\db{G}$, $\tn{Hom}_{k\db{G}}(U,W)\cong\varprojlim_{N}\tn{Hom}_{k\db{G}}(U_{N},W_{N})$ (cf. \cite[Remark 2.8]{JR}). 


\subsection{Radicals and socles of pseudocompact $k\db{G}$-modules}

\begin{defi}\label{DefRad}
	Let $A$ be a pseudocompact $k$-algebra and let $U$ be a pseudocompact $A$-module. The \emph{radical of $U$}, denoted by $\tn{Rad}(U)$, is the intersection of the maximal open submodules of $U$.  For each $i\geq 2$ define $\tn{Rad}^i(U)$ recursively to be $\tn{Rad}(\tn{Rad}^{i-1}(U))$, where $\tn{Rad}^1(U) = \tn{Rad}(U)$.  For convenience, we also define $\tn{Rad}^0(U) = U$.
\end{defi}

\begin{lem}\label{lem:charactofradicalofamodule}
	Let $U$ be pseudocompact $k\db{G}$-module with $U=\varprojlim_{N}\{U_{N},\varphi_{MN}\}$. Then
	\begin{enumerate}
		\item For each $i\geq 1$, $\tn{Rad}^{i}(U)=\varprojlim_{N}\tn{Rad}^{i}(U_{N})$ and the maps of the inverse system are surjective.
		\item For each $i\geq 0$, $\tn{Rad}^{i}(U)/\tn{Rad}^{i+1}(U)\cong \varprojlim_{N}\frac{\tn{Rad}^{i}(U_{N})}{\tn{Rad}^{i+1}(U_{N})}$.
	\end{enumerate}
	
\end{lem}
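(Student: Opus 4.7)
The plan is to establish (1) by induction on $i$ and then derive (2) from (1) using Lemma \ref{lem:quotientpseudocompactmodule}. The unifying observation is that maximal open submodules of a pseudocompact module correspond to maximal submodules of its finite-dimensional structural quotients, so the radical can be read off from any presentation as an inverse limit of discrete finite-dimensional modules.

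For the base case $i=1$ of (1), since $\{I_{N}U : N \trianglelefteq_O G\}$ is a neighbourhood basis of $0$ in $U$, every maximal open submodule $K$ contains some $I_{N}U$ and is therefore of the form $\varphi_{N}^{-1}(M)$ for a maximal submodule $M$ of $U_{N}$; conversely every such preimage is maximal open. Intersecting gives $\tn{Rad}(U)=\bigcap_{N}\varphi_{N}^{-1}(\tn{Rad}(U_{N}))$, and interpreted inside $U=\varprojlim_{N}U_{N}$ this is precisely $\varprojlim_{N}\tn{Rad}(U_{N})$. Surjectivity of the induced transitions $\tn{Rad}(U_{N})\onto\tn{Rad}(U_{M})$ then follows from the identification $\tn{Rad}(U_{N})=J(k[G/N])\,U_{N}$ together with the standard fact that any surjection of finite-dimensional $k$-algebras carries Jacobson radical onto Jacobson radical.

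For the inductive step, the inductive hypothesis realises $\tn{Rad}^{i-1}(U)=\varprojlim_{N}\tn{Rad}^{i-1}(U_{N})$ as a pseudocompact module with surjective transitions. The subtle point is that this inverse system need not agree with the intrinsic coinvariant system of $\tn{Rad}^{i-1}(U)$, but the two give the same subspace topology inherited from $U$, so the kernels of the projections $\tn{Rad}^{i-1}(U)\to\tn{Rad}^{i-1}(U_{N})$ still form a neighbourhood basis of $0$. Repeating the base-case argument with this basis yields $\tn{Rad}^{i}(U)=\tn{Rad}(\tn{Rad}^{i-1}(U))=\varprojlim_{N}\tn{Rad}^{i}(U_{N})$, and the new transitions are surjective by the same Jacobson-radical calculation.

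To obtain (2), I would apply Lemma \ref{lem:quotientpseudocompactmodule} to the pair $(\tn{Rad}^{i}(U),\tn{Rad}^{i+1}(U))$, defining $W_{N}$ to be the kernel of the natural map $\tn{Rad}^{i}(U)_{N}\to\tn{Rad}^{i}(U_{N})/\tn{Rad}^{i+1}(U_{N})$ induced by the surjection of part (1); this map is well-defined because $I_{N}$ annihilates $U_{N}$ and thus kills $I_{N}\tn{Rad}^{i}(U)$. Conditions (i) and (ii) of the lemma are routine, and condition (iii), $\varprojlim_{N}W_{N}=\tn{Rad}^{i+1}(U)$, follows at once from part (1). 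The main technical point of the whole argument is the topological subtlety noted above: the finite-dimensional quotients $\tn{Rad}^{i}(U_{N})$ provided by part (1) need not coincide with the intrinsic coinvariants $\tn{Rad}^{i}(U)_{N}$, so one must be prepared to compute radicals from any basis of open submodules rather than only from a canonical coinvariant system.
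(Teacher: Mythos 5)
Your proof is correct and follows essentially the same route as the paper: induction on $i$ for Part 1 (with surjectivity of the restricted transition maps established via the Jacobson radical of $k[G/N]$, where the paper simply cites a reference for the same fact), and an application of Lemma~\ref{lem:quotientpseudocompactmodule} for Part 2. The one place you add genuine content beyond the paper's terse ``routine checks'' is the explicit acknowledgment that the inverse system $\{\tn{Rad}^{i}(U_N)\}$ is not the coinvariant system of $\tn{Rad}^{i}(U)$ -- forcing you either to redo the base-case argument with the open kernels $I_NU\cap\tn{Rad}^{i-1}(U)$, or to choose $W_N$ in Lemma~\ref{lem:quotientpseudocompactmodule} as a kernel inside the actual coinvariant module $\tn{Rad}^{i}(U)_N$ -- a subtlety the paper leaves implicit but which your treatment handles correctly.
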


\begin{proof}
The case $i=1$ of the first part follows from routine checks, using that $\varphi_{MN}(\tn{Rad}(U_{N}))= \tn{Rad}(U_{M})$ whenever $N\leq M$ by \cite[Proposition 9.15]{And}, while the general case follows by induction on $i$.  The second part follows from Lemma \ref{lem:quotientpseudocompactmodule}.
\end{proof}

\begin{defi}\label{def:semisimplemodule}
	Let $A$ be a pseudocompact algebra and $U$ a pseudocompact $A$-module. We say that $U$ is \emph{semisimple} if every closed submodule $W$ of $U$ has a closed complement -- that is, there is a closed submodule $W'$ of $U$ such that $U=W\oplus W'$.
\end{defi}

\begin{lem}[{\cite[Lemma 3.9]{Iov}}]\label{lem:equivdefsemisimplicity}
	Let $A$ be a pseudocompact algebra and $U$ a pseudocompact $A$-module. The following are equivalent:
	\begin{enumerate}
		\item $U$ is semisimple.
		\item Every open submodule of $U$ has a complement.
		\item $U$ is a direct product of simple modules.
	\end{enumerate}
\end{lem}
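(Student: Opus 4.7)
The plan is to establish the cycle $(1)\Rightarrow(2)\Rightarrow(3)\Rightarrow(1)$.  The implication $(1)\Rightarrow(2)$ is immediate since open submodules are closed.

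For $(2)\Rightarrow(3)$, I would first show that for every open submodule $V$ of $U$ the finite-dimensional quotient $U/V$ is semisimple in the classical sense: any submodule of $U/V$ pulls back to an open submodule of $U$ containing $V$, whose complement guaranteed by $(2)$ projects to a complement in $U/V$.  Hence each $U/V$ decomposes as a finite direct sum of simples.  Next, every simple submodule $T\subseteq U/V$ lifts to a simple closed submodule of $U$: writing $U=V\oplus V'$ by $(2)$ and letting $\widetilde T$ be the preimage of $T$, one gets $\widetilde T=V\oplus(\widetilde T\cap V')$ with $\widetilde T\cap V'\cong T$ simple.  Consequently the sum $\Sigma$ of all simple closed submodules of $U$ surjects onto every $U/V$, so $\Sigma$ is dense in $U$.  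Finally, I would apply Zorn's lemma to the collection of families $\{S_\lambda\}_{\lambda\in\Lambda}$ of simple closed submodules for which the canonical continuous map $\prod_{\lambda\in\Lambda}S_\lambda\to U$ is injective with closed image; a maximal such family has image containing every simple closed submodule (otherwise one could enlarge $\Lambda$) and hence dense image, which combined with closedness yields the desired topological isomorphism $U\cong\prod_\lambda S_\lambda$.

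For $(3)\Rightarrow(1)$, I would invoke the duality between pseudocompact and discrete modules (cf.\ \cite{Bru}): the continuous dual of $\prod_\lambda S_\lambda$ is $\bigoplus_\lambda S_\lambda^{*}$, a direct sum of simple discrete modules, which is classically semisimple.  Every submodule of this dual is therefore a direct summand, and the duality translates this back into the statement that every closed submodule of $U$ admits a closed complement.

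The main obstacle is the last step of $(2)\Rightarrow(3)$: a naive Zorn argument only produces an algebraically independent family of simples with dense sum, but promoting this to a genuine topological direct product decomposition requires careful use of the pseudocompact inverse-limit structure of $U$, the finite-dimensionality of each $U/V$, and the fact that continuous maps between pseudocompact modules have closed image.  One may need to argue in terms of compatible decompositions of the $U/V$ at each finite stage and pass to the inverse limit, rather than relying on Zorn directly.
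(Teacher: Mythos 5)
The paper does not prove this lemma; it is quoted directly from \cite[Lemma 3.9]{Iov}, so there is no in-paper argument to compare yours against. Evaluating your proposal on its own terms: the implication $(1)\Rightarrow(2)$ is correct as stated, and the opening moves of $(2)\Rightarrow(3)$ are sound --- pulling back a submodule of $U/V$ does give an open submodule of $U$, its complement projects to a complement in $U/V$, and your lifting of simple subquotients to simple closed submodules via $U=V\oplus V'$ and $\widetilde T = V\oplus(\widetilde T\cap V')$ is correct. The density of the sum $\Sigma$ of simple closed submodules follows.

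The genuine gap is exactly where you flag it, and it is more serious than you suggest. For an arbitrary family $\{S_\lambda\}$ of closed submodules of $U$ there is no canonical continuous map $\prod_\lambda S_\lambda \to U$ in the first place (one would need the family to be summable, i.e.\ every open submodule of $U$ to contain all but finitely many $S_\lambda$), so the class you are applying Zorn to is not clearly nonempty beyond finite families, and the chain condition fails: if $\Lambda$ is a union of a chain, the ``sum map'' for $\Lambda$ need not exist even if it exists for each member of the chain. Moreover, given a maximal family with closed image $\Sigma'\subsetneq U$, a simple closed submodule $S$ not contained in $\Sigma'$ may nonetheless meet $\Sigma'$ (density of $\sum S_\lambda$ in $\Sigma'$ does not control $\Sigma'\cap S$), so maximality cannot be contradicted in the way you indicate. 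The reliable route, which you already gesture at in $(3)\Rightarrow(1)$, is to run the whole argument through the Gabriel duality between pseudocompact and discrete $A$-modules (cf.\ \cite{Bru}): condition $(2)$ dualizes to the statement that every finitely generated submodule of the discrete module $U^{*}$ is a direct summand, whence $U^{*}$ is an abstract semisimple module $\bigoplus_\lambda T_\lambda$ by the classical theory, and dualizing back gives $U\cong\prod_\lambda T_\lambda^{*}$; the converse $(3)\Rightarrow(1)$ is then the same duality read in the other direction. That replaces the problematic Zorn step by a well-posed one (Zorn inside a discrete module, where sums are honest sums), and gives both $(2)\Rightarrow(3)$ and $(3)\Rightarrow(1)$ uniformly.
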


Recall that the \emph{socle} $\tn{Soc}(V)$ of a finite dimensional module $V$ is the largest semisimple submodule of $V$.  If $N\leq M$ are open normal subgroups of $G$ and $X$ is a simple submodule of $U_{N}$, then
$\varphi_{MN}(X)$ is simple or zero, so that $\varphi_{MN}(\tn{Soc}(U_{N}))\subseteq \tn{Soc}(U_{M})$.

\begin{lem}\label{lem:charactofsocleofamodule}
	Let $G$ be a profinite group and $U$ a pseudocompact $k\db{G}$-module. The restriction of the inverse system $\{U_N,\varphi_{MN}\}$ to the socles yields an inverse system, whose limit is the maximal closed semisimple submodule of $U$. 
\end{lem}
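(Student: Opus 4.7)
The plan is to set $S := \varprojlim_{N}\tn{Soc}(U_N)$, taken along the restrictions $\varphi_{MN}\colon \tn{Soc}(U_N) \to \tn{Soc}(U_M)$ noted just before the statement, with $N$ ranging over open normal subgroups of $G$. Under the canonical embedding of inverse limits, $S$ is naturally a closed submodule of $U = \varprojlim_{N}U_N$, explicitly $S = \bigcap_N \varphi_N^{-1}(\tn{Soc}(U_N))$. It remains to show (i) that $S$ is semisimple, and (ii) that every closed semisimple submodule of $U$ is contained in $S$.

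For (ii), let $W \subseteq U$ be closed and semisimple and fix an open normal subgroup $N$ of $G$. Since $U_N$ is finite-dimensional, $W \cap \ker\varphi_N$ is an open submodule of $W$, so Lemma \ref{lem:equivdefsemisimplicity}(2) furnishes a closed complement $W' \subseteq W$ with $W' \cong \varphi_N(W)$. A routine application of the modular law to the complementation property in Lemma \ref{lem:equivdefsemisimplicity}(1) shows that every closed submodule of a semisimple pseudocompact module is itself semisimple, so $W'$, and hence $\varphi_N(W)$, is semisimple. Therefore $\varphi_N(W) \subseteq \tn{Soc}(U_N)$ for every $N$, and $W \subseteq S$ follows.

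The main obstacle is (i), the semisimplicity of $S$. My approach is via the Jacobson radical. Applying Lemma \ref{lem:charactofradicalofamodule}(1) to $k\db{G}$ as a module over itself yields $\tn{Rad}(k\db{G}) = \varprojlim_{N}\tn{Rad}(k[G/N])$ with surjective structure maps. Since $\tn{Rad}(k[G/N]) \cdot \tn{Soc}(U_N) = 0$ by standard finite-dimensional theory, passing to inverse limits gives $\tn{Rad}(k\db{G}) \cdot S = 0$. Writing $S$ as the inverse limit of its finite-dimensional quotients $S/K_N$, where $K_N := S \cap \ker\varphi_N$, each $S/K_N$ embeds in $\tn{Soc}(U_N)$ and is therefore semisimple. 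Decomposing each $S/K_N$ into its isotypic components under the simple $k\db{G}$-modules -- a decomposition preserved by the transitions, since $k\db{G}$-module maps between semisimple modules respect isotypic type -- and reorganizing the limit accordingly, $S$ becomes a direct product of simple $k\db{G}$-modules, whence semisimple by Lemma \ref{lem:equivdefsemisimplicity}(3).
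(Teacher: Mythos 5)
Your argument for maximality (your point (ii)) is correct and takes a somewhat different route from the paper: rather than decomposing the closed semisimple submodule into a product of simples via Lemma \ref{lem:equivdefsemisimplicity}(3) and pushing each simple factor into the limit, you work directly with a closed complement of $W\cap\ker\varphi_N$ inside the semisimple module $W$, together with the (correct, via the modular law) fact that closed submodules of semisimple pseudocompact modules are semisimple. Either approach works; note that both, like the statement of the lemma itself, tacitly assume each $U_N$ is finite dimensional so that $\tn{Soc}(U_N)$ is the one introduced just before the lemma, and your appeal to openness of $W\cap\ker\varphi_N$ uses this.

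Your argument for semisimplicity of $S$ (your point (i)) is where the gap lies. The paper verifies Lemma \ref{lem:equivdefsemisimplicity}(2) directly: it suffices to split the canonical projection $L\to L/W$ for every open $W$, and this follows from Lemma \ref{lem:splitepis} because the finite-level images of $L$ sit inside the semisimple $\tn{Soc}(U_N)$. Your route via isotypic decomposition replaces this with an unproved assertion at the crucial step: after observing that the transitions $S/K_N\to S/K_M$ respect isotypic components (correct) and that the inverse limit commutes with the \emph{finite} direct-sum decompositions (also correct), you must still show that, for a fixed simple $T$, a surjective inverse system of finite $T$-isotypic modules has inverse limit a direct product of copies of $T$. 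You assert this as ``reorganizing the limit accordingly,'' but it is not a formal manipulation; it is essentially of the same depth as the lemma being proved, and needs its own argument (e.g.\ dualizing to an ascending union of finite $T^{*}$-isotypic discrete modules, or constructing compatible splittings of the transition maps). The paper's use of Lemma \ref{lem:splitepis} is precisely what sidesteps this. Incidentally, your computation that $\tn{Rad}(k\db{G})\cdot S=0$ is correct, but it is never actually used in the remainder of your argument.
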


\begin{proof}
	Denote by  $L=\varprojlim_{N}\{\tn{Soc}(U_{N}),\varphi_{MN}|_{\tn{Soc}(U_N)}\}$.  We first confirm that $L$ is a semisimple submodule of $U$. Let $W$ be an open submodule of $L$. By Lemma \ref{lem:equivdefsemisimplicity}, it is sufficient to confirm that the canonical projection $\pi:L\to L/W$ splits.  But since $L/W$ is finite dimensional, $\pi$ factors through $\tn{Soc}(U_N)$ for some cofinal subset of $N\trianglelefteq_O G$, as $\pi_N\varphi_N$.  Since $\tn{Soc}(U_N)$ is semisimple and $\pi_N$ is surjective, there is a non-empty linear variety $I_N = \{\iota_N : L/W \to \tn{Soc}(U_N)\,|\,\pi_N\iota_N = \tn{id}_{L/W}\}\subseteq\tn{Hom}_{k\db{G}}(L/W,\tn{Soc}(U_N))$, which is necessarily closed since the latter is finite dimensional.  The $I_N$ form an inverse system in the obvious way, the limit is non-empty by \cite[Lemma 2.3]{JR}, and an element of the limit is a splitting of $\pi$.
	
	
	To prove the lemma, it remains to check that every closed semisimple submodule $V$ of $U$ is contained in $L$.  By Lemma \ref{lem:equivdefsemisimplicity}, $V$ is a direct product of simple modules. It is thus sufficient to confirm that every simple submodule $S$ of $V$ is contained in $L$.  But $\varphi_{N}(S)\subseteq \tn{Soc}(U_{N})$ for each $N$ and so $S\subseteq\varprojlim_{N}\tn{Soc}(U_{N})=L$. Hence, $V\subseteq L$.   
\end{proof}

\begin{defi}\label{def:socleofamodule}
	The \emph{socle} of $U$, denoted by $\tn{Soc}(U)$, is the maximal closed semisimple submodule of $U$. 
\end{defi}

Observe that the inverse system of socles need not be surjective, and it can happen that $\tn{Soc}(U)=0$.

\begin{lem}\label{lem:quotientUoverSocUasinverselimit}
	Let $G$ be a profinite group and $U$ a pseudocompact $k\db{G}$-module. Then $U/\tn{Soc}(U)\cong\varprojlim_{N}U_{N}/\tn{Soc}(U_{N})$.
\end{lem}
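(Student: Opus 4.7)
The plan is to apply Lemma \ref{lem:quotientpseudocompactmodule} directly, with $V=\tn{Soc}(U)$ and, for each $N\trianglelefteq_O G$, $W_N=\tn{Soc}(U_N)$. The conclusion of that lemma is precisely $U/V\cong\varprojlim_N U_N/W_N$, which in this case gives exactly what we want. So the work reduces to verifying the three hypotheses of Lemma \ref{lem:quotientpseudocompactmodule}.

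First I would check that $\varphi_{MN}(\tn{Soc}(U_N))\subseteq\tn{Soc}(U_M)$ whenever $N\leq M$: this is the observation stated in the paragraph immediately before Lemma \ref{lem:charactofsocleofamodule}, that the image under $\varphi_{MN}$ of a simple submodule of $U_N$ is either simple or zero. Next I would check that $\varphi_N(\tn{Soc}(U))\subseteq\tn{Soc}(U_N)$ for each $N$: since Lemma \ref{lem:charactofsocleofamodule} identifies $\tn{Soc}(U)$ with $\varprojlim_N\tn{Soc}(U_N)$ as a closed submodule of $\varprojlim_N U_N = U$, the canonical projection to the $N$-th factor, which is precisely $\varphi_N$ restricted to $\tn{Soc}(U)$, lands in $\tn{Soc}(U_N)$. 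Finally, $\varprojlim_N\tn{Soc}(U_N)=\tn{Soc}(U)$ is again the content of Lemma \ref{lem:charactofsocleofamodule}.

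With these three conditions verified, Lemma \ref{lem:quotientpseudocompactmodule} delivers the isomorphism $U/\tn{Soc}(U)\cong\varprojlim_N U_N/\tn{Soc}(U_N)$ directly.

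There is no real obstacle here: the only point requiring any care is recognizing that the restriction of the projection $\varphi_N\colon U\to U_N$ to the closed submodule $\tn{Soc}(U)=\varprojlim_M\tn{Soc}(U_M)$ coincides with the structural map of this inverse limit into its $N$-th term, so that its image lies in $\tn{Soc}(U_N)$. Notably, surjectivity of the transition maps $\varphi_{MN}|_{\tn{Soc}(U_N)}$ is \emph{not} assumed in Lemma \ref{lem:quotientpseudocompactmodule}, which is fortunate because, as remarked after Lemma \ref{lem:charactofsocleofamodule}, the inverse system of socles need not be surjective.
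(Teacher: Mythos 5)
Your proposal is correct and is exactly the paper's argument: apply Lemma~\ref{lem:quotientpseudocompactmodule} with $V=\tn{Soc}(U)$ and $W_N=\tn{Soc}(U_N)$, the hypotheses being guaranteed by the remark before Lemma~\ref{lem:charactofsocleofamodule} and by Lemma~\ref{lem:charactofsocleofamodule} itself. You spell out the three hypothesis checks more explicitly than the paper does, but the route is the same.
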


\begin{proof}
	The result follows by applying Lemma \ref{lem:quotientpseudocompactmodule} with $V=\tn{Soc}(U)$ and $W_{N}=\tn{Soc}(U_{N})$.
\end{proof}

\subsection{Blocks and defect groups}

Let $k$ be a field of characteristic $p$ and $A$ a pseudocompact $k$-algebra. An element $e\in A$ is \emph{idempotent} if $e^{2}=e$. Two idempotents $e,f$ of $A$ are \emph{orthogonal} if $ef=fe=0$. A non-zero idempotent is \emph{primitive} if it cannot be written as the sum of two non-zero orthogonal idempotents. We denote by $Z(A)$ the center of $A$. An idempotent $e\in A$ is called \emph{centrally primitive} if $e$ is a primitive idempotent of $Z(A)$.

By \cite[IV. \S 3, Corollaries 1,2]{G}, there is a set of pairwise orthogonal centrally primitive idempotents $E=\{e_{i}\mbox{ : }i\in I\}$ in $A$ such that
\begin{eqnarray}\label{eq:blockdecomp}
	A = \prod\limits_{i\in I}Ae_{i} = \prod\limits_{i\in I}B_{i}.
\end{eqnarray}
Each $B_{i}$ is an algebra with unity $e_{i}$. We call $B_{i}$ a \emph{block of $A$} and $e_{i}$  a \emph{block idempotent}. 

We give a brief reminder of the defect group of a block.  For more details and the results mentioned in this paragraph, see \cite[\S 5]{JR}.  The group $G$ acts continuously on $k\db{G}$ by conjugation (notation $g\cdot x = {}^gx$).  If $H$ is an open subgroup of $G$, denote by $k\db{G}^H$ the set of $H$-fixed points of $k\db{G}$ under this action.  We have a linear map $\tn{Tr}_H^G: k\db{G}^H\to k\db{G}^G$ given by $\tn{Tr}_H^G(a):= \sum_{g\in G/H}{}^{g}a$, where ``$G/H$'' denotes a set of left coset representatives of $H$ in $G$.  If $B$ is a block of $k\db{G}$ with block idempotent $e$, a \emph{defect group} of $B$ is a closed subgroup $D$ of $G$, minimal with the property that 
$$e\in \bigcap_{N\trianglelefteq_{O}G}\tn{Tr}_{DN}^G(k\db{G}^{DN}).$$
The defect groups of a block are pro-$p$ subgroups of $G$, unique up to conjugation in $G$.

A block of $k\db{G}$ is precisely an indecomposable summand of $k\db{G}$, treated as a $k\db{G\times G}$-module with action $(g_1,g_2)\cdot x := g_1xg_2^{-1}$.  We may thus define the coinvariants of a block $B$: if $N$ is an open normal subgroup of $G$, we define $B_N := B_{N\times N}$.  For any given $N$, $B_N$ is a direct product of blocks of $k[G/N]$, though not necessarily a block.  As with $k\db{G}$, the natural projections $\varphi_{MN}, \varphi_N$ are algebra homomorphisms.  We recall some further results from \cite{JR}:

\begin{lem}[{\cite[Proposition 4.7 and Corollary 5.10]{JR}}]\label{lem:inverselimofblockswithcyclicdefectgrp}
	If the block $B$ of $k\db{G}$ has defect group $D$, then $B$ can be expressed as the inverse limit of a surjective inverse system of blocks $Z_N$ of $k[G/N]$ having defect group $DN/N$, as $N$ runs through some cofinal inverse system of open normal subgroups of $G$.  
\end{lem}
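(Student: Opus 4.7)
The plan is to refine the presentation $B=\varprojlim_N B_N$ (where $B_N=B_{N\times N}$ is, by the discussion preceding the lemma, a direct product of blocks of $k[G/N]$) to an inverse system of \emph{single} blocks, and then identify the defect group via the transfer characterization of $D$.

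First, for each $N\trianglelefteq_{O}G$, decompose $B_N=\prod_{j\in J_N}X_{N,j}$ with the $X_{N,j}$ blocks of $k[G/N]$ and $e_{N,j}$ the corresponding centrally primitive idempotents. For $N\subseteq M$, the surjective algebra homomorphism $\varphi_{MN}:B_N\twoheadrightarrow B_M$ sends each $e_{N,j}$ to a sum of some of the $e_{M,i}$'s; since $\varphi_{MN}(e_B)=e_B=\sum_i e_{M,i}$, these sums partition $J_M$, giving a function $\sigma_{MN}:J_M\to J_N$. The system $\{J_N,\sigma_{MN}\}$ is a cofiltered inverse system of non-empty finite sets, hence $\varprojlim J_N$ is non-empty. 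Fix $(j_N)\in\varprojlim J_N$ and set $X_N:=X_{N,j_N}$.

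Next I would show that, after passing to a cofinal sub-system if necessary, the idempotents $e_{X_N}$ form a compatible family (i.e.\ $\varphi_{MN}(e_{X_N})=e_{X_M}$), so that $\{X_N\}$ is a surjective inverse system of blocks with $B=\varprojlim X_N$. The point is that any well-defined limit $e:=\varprojlim e_{X_N}$ is a central idempotent of $B$, so by indecomposability of $B$ must equal $e_B$; conversely, a splitting $\varphi_{MN}(e_{X_N})=e_{X_M}+e_{M,i}+\cdots$ with more than one summand would, via a Mittag--Leffler/compactness argument over the tree of possible choices, produce a second, orthogonal, non-zero central idempotent of $B$, contradicting the fact that $B$ is a block.

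For the defect group assertion, the key input is that $e_B\in\bigcap_{M\trianglelefteq_{O}G}\tn{Tr}_{DM}^{G}(k\db{G}^{DM})$, together with the compatibility of $\varphi_N$ with the conjugation action and with transfer along the quotient $G\twoheadrightarrow G/N$. Applying $\varphi_N$ to $e_B\in\tn{Tr}_{DN}^{G}(k\db{G}^{DN})$ yields $e_{X_N}\in\tn{Tr}_{DN/N}^{G/N}(k[G/N]^{DN/N})$, so the defect group of $X_N$ is contained, up to $G/N$-conjugacy, in $DN/N$. For the reverse containment on a cofinal sub-system I would argue by minimality of $D$: if the defect group of $X_N$ were strictly smaller than $DN/N$ cofinally often, a coherent choice of conjugates, which exists by compactness of the space of closed subgroups of $G$, would assemble into a closed pro-$p$ subgroup $D^{*}\subsetneq D$ still satisfying the transfer condition defining a defect group of $B$, contradicting the minimality of $D$.

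The main obstacle I anticipate is the defect-group equality in the last step: the containment $\subseteq$ is routine transfer bookkeeping, but upgrading it to equality along a cofinal sub-system requires careful tracking of $G/N$-conjugacy classes of $p$-subgroups across the tower $\{G/N\}$ and a genuine limit argument on closed pro-$p$ subgroups of $G$ to produce the subgroup $D^{*}$ that contradicts the minimality of $D$. A secondary subtlety is the compatibility step in paragraph two: since idempotents do not lift through arbitrary pseudocompact surjections, the reduction to an inverse system of single blocks must invoke the indecomposability of $B$ in just the right way.
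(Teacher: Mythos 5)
The paper does not give its own proof of this lemma: it is stated as a citation to \cite[Proposition 4.7 and Corollary 5.10]{JR}, so there is no internal proof to compare against, and the assessment below is of your argument on its own terms.

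Your general plan -- construct a coherent thread of single blocks through the $B_N$'s and then use the transfer characterization plus minimality of $D$ -- is in the right spirit, but the second step contains a genuine gap. The claim that ``after passing to a cofinal sub-system the idempotents $e_{X_N}$ form a compatible family $\varphi_{MN}(e_{X_N})=e_{X_M}$'' is much stronger than it appears. If $(f_N)$ is any coherent family of central idempotents ($f_N\in Z(B_N)$, $\varphi_{MN}(f_N)=f_M$), then $f:=\varprojlim f_N$ is a central idempotent of $B=\varprojlim B_N$ (it commutes with every $b\in B$ because each $\varphi_N(bf-fb)=0$ and the $\varphi_N$ are jointly injective), and since $B$ is a block this forces $f\in\{0,e_B\}$, hence $f_N\in\{0,\varphi_N(e_B)\}$ for every $N$. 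So there is no room for the $e_{X_N}$ to be compatible unless $e_{X_N}=\varphi_N(e_B)$, i.e.\ unless $B_N$ is \emph{already} a single block on the chosen cofinal system. Your construction of a thread $(j_N)\in\varprojlim J_N$ followed by a Mittag--Leffler argument ``over the tree of possible choices'' therefore cannot produce a compatible family of proper sub-block idempotents; what actually has to be proved is that $\varphi_N(e_B)$ is centrally primitive for all $N$ in some cofinal system, and that needs a different (block-theoretic) argument, not a compactness argument about choices of blocks. (In the cyclic-defect case the paper later deduces this in Lemma~\ref{lem:B_Nisablock} from finiteness of $\mathcal{S}$, but this lemma is stated and used for general $D$.)

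A secondary point: even granting the reduction, your defect-group step establishes only the containment of a defect group of $X_N$ in $DN/N$ via transfer; the upgrade to equality via ``a coherent choice of conjugates assembling into $D^\ast\subsetneq D$'' is only sketched, and you would need to verify that the assembled $D^\ast$ is closed, that the transfer condition passes to the limit correctly, and that conjugacy in the $G/N$'s can be chosen coherently -- none of which is routine. Both gaps are real: the first (compatibility of idempotents) is the place where the proposed approach would actually fail, not just where detail is missing.
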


In \cite{JR} the blocks $Z_N$ of Lemma \ref{lem:inverselimofblockswithcyclicdefectgrp} were called ``$X_N$'', but in this article we will use the notation $X_N$ for something else.

\section{Blocks with cyclic defect groups}

From this point forward we suppose that $k$ is algebraically closed.  Recall that a profinite group is \emph{cyclic} if it possesses a dense cyclic abstract subgroup.  The cyclic pro-$p$ groups are the finite cyclic groups $\mathbb{Z}/p^n\mathbb{Z}$ ($n\in \mathbb{N}$) and the $p$-adic integers $\mathbb{Z}_p$.  We fix some notation.  Let $B$ be block of the profinite group $G$ having non-trivial cyclic defect group $D$.  Denote by $\mathcal{S}$ a set of representatives of the isomorphism classes of simple $B$-modules.  By \cite[Proposition 3.3]{FranZub}, there is a natural bijection between the indecomposable projective $B$-modules and the simple $B$-modules, given by sending the projective indecomposable $P$ to the simple module $P/\tn{Rad}(P)$.

Denote by $\mathcal{P} = \{P_S\,:\,S\in \mathcal{S}\}$ a set of representatives of the isomorphism classes of indecomposable projective $B$-modules, indexed in such a way that $P_S$ is the projective cover of $S$.  Write $B=\varprojlim_{N\in\mathcal{N}}Z_{N}$ as in Lemma \ref{lem:inverselimofblockswithcyclicdefectgrp}, with the cofinal set $\mathcal{N}$ chosen so that the block $Z_N$ has (cyclic) defect group $DN/N$.   For $N\in \mathcal{N}$, we denote by $\mathcal{S}_N, \mathcal{P}_N$ corresponding sets of representatives of simple modules and indecomposable projective modules for $Z_N$.

\begin{lem}\label{lem:Ifinite}
	The set $\mathcal{S}$ (hence also $\mathcal{P}$) is finite.
\end{lem}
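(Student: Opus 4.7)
The plan is to reduce to the classical theory for finite groups. Each $X_N$ is a block of the finite group algebra $k[G/N]$ with cyclic defect group $DN/N$, so by Brauer's theory the number of isomorphism classes of simple $X_N$-modules equals the inertial index of $X_N$, which divides $p - 1$. In particular, $|\mathcal{S}_N| \leq p - 1$ uniformly in $N \in \mathcal{N}$.

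Whenever $N \leq M$ in $\mathcal{N}$, the algebra surjection $\varphi_{MN}: X_N \onto X_M$ lets us pull simple $X_M$-modules back to simple $X_N$-modules, yielding an injection $\mathcal{S}_M \hookrightarrow \mathcal{S}_N$. Hence $|\mathcal{S}_N|$ is non-decreasing as $N$ shrinks within $\mathcal{N}$, and being bounded above by $p-1$ it must stabilize. I would fix $N_0 \in \mathcal{N}$ realizing the stable value, so that for every $N \leq N_0$ in $\mathcal{N}$ the injection $\mathcal{S}_{N_0} \hookrightarrow \mathcal{S}_N$ is a bijection of finite sets of equal size.

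I would then exhibit a bijection $\mathcal{S}_{N_0} \to \mathcal{S}$ via pullback along $\varphi_{N_0}: B \onto X_{N_0}$. The map is well-defined, and it is injective because $\varphi_{N_0}$ is a surjection of algebras, so any $B$-module isomorphism between the pullbacks of two simple $X_{N_0}$-modules is automatically $X_{N_0}$-linear. For surjectivity, a simple pseudocompact $B$-module $S$ is finite dimensional: by simplicity any proper open submodule of $S$ is zero, so $\{0\}$ must occur in the basis of $0$ from Definition \ref{def:pseudocompactmod}, and the finite-codimension condition for that basis element then forces $\dim S < \infty$. Hence the $B$-action on $S$ factors through some $X_N$ with $N \in \mathcal{N}$ and $N \leq N_0$, making $S$ a simple $X_N$-module; via the stabilization bijection $\mathcal{S}_{N_0} \leftrightarrow \mathcal{S}_N$, $S$ corresponds to a unique simple $X_{N_0}$-module whose pullback is $S$. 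We conclude $|\mathcal{S}| = |\mathcal{S}_{N_0}| \leq p - 1$.

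The main obstacle I anticipate is this last step, namely justifying that every simple $B$-module really does factor through the stabilized block $X_{N_0}$. This relies both on finite-dimensionality of simple pseudocompact modules and on the cofinality of $\mathcal{N}$ within the open normal subgroups of $G$, so that the open $N$ through which the $B$-action on $S$ factors can be replaced by one lying in $\mathcal{N}$ below $N_0$.
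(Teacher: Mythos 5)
Your proof is correct, and it reaches the same finite bound $|\mathcal{S}|\le p-1$, but by a genuinely different route from the paper's. The paper's argument is by contradiction and very short: it cites \cite[Theorem 11.1.1, Theorem 11.1.3]{lin2} to get $|\mathcal{S}_N|\mid p-1$, supposes $p$ distinct simple $B$-modules exist, and invokes \cite[Corollary 4.9]{JR} (a result about profinite blocks proved elsewhere) to push all $p$ of them simultaneously into a single $X_N$, which is absurd. You instead re-derive the essential content of that cited result from scratch: you observe that a simple pseudocompact $B$-module is finite-dimensional (because the cofinite open submodules intersect in $0$, so one of them must be $0$), hence its annihilator is an open ideal containing some $\ker(B\to X_N)$ by cofinality, so the action factors through a finite level; and you replace the contradiction by a positive stabilization argument, noting that the pullback injections $\mathcal{S}_M\hookrightarrow\mathcal{S}_N$ (for $N\le M$) are non-decreasing and bounded, so they stabilize at some $N_0$, giving an explicit bijection $\mathcal{S}_{N_0}\to\mathcal{S}$. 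What your route buys is self-containedness (no appeal to \cite[Corollary 4.9]{JR}) and a constructive identification of $\mathcal{S}$ with $\mathcal{S}_{N_0}$, which is in fact the kind of information the paper extracts slightly later in Lemma \ref{lem:B_Nisablock}; what the paper's route buys is brevity, since \cite[Corollary 4.9]{JR} is already available. One small caution worth making explicit in your write-up: when you deduce that the action of $B$ on $S$ factors through some $X_N$ with $N\le N_0$, you should say that the kernels of the projections $B\to X_N$ form a fundamental system of open two-sided ideals (because $B=\varprojlim X_N$ in the category of pseudocompact algebras), so the open ideal $\tn{Ann}_B(S)$ contains one of them, and cofinality of $\mathcal{N}$ lets you shrink $N$ below $N_0$; as written this step is asserted rather than justified.
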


\begin{proof}
	It follows from \cite[Theorem 11.1.1 and Theorem 11.1.3]{lin2} that $|\mathcal{S}_N|$ divides $p-1$.
	
	Assume for contradiction that $|\mathcal{S}|\geq p$ and fix a set $\{S_{1},\hdots,S_{p}\}$ of distinct simples in $\mathcal{S}$. By \cite[Corollary 4.9]{JR}, there is $N\in\mathcal{N}$ such that $\{S_{1},\hdots,S_{p}\}$ are distinct simple modules in $Z_{N}$. But this contradicts the first sentence.
\end{proof}

Using Lemma \ref{lem:Ifinite}, we restrict $\mathcal{N}$ further, assuming from now on that each $N\in \mathcal{N}$ acts trivially on every simple module in $B$.

\begin{lem}\label{lem:PN indec}
	For each $P = P_S\in \mathcal{P}$ and $N\in \mathcal{N}$, the module $P_N$ is non-zero and indecomposable.
\end{lem}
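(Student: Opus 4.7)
The plan is to realise $P_S$ as $Be$ for a primitive idempotent $e\in B$ and to reduce the lemma to a property of the image $\bar e := \varphi_N(e)\in X_N$. Because $\mathcal{S}$ is finite (Lemma \ref{lem:Ifinite}) and each simple $B$-module is finite dimensional, Lemma \ref{lem:charactofradicalofamodule} applied to $B$ (viewed as a pseudocompact module over itself) shows that $B/\tn{Rad}(B)$ is a finite-dimensional semisimple $k$-algebra; lifting the primitive idempotent of this quotient corresponding to $S$ yields a primitive idempotent $e\in B$ with $Be\cong P_S$. Since $N$ is normal in $G$ and the block idempotent of $B$ is central in $k\db{G}$, the kernel of the algebra map $\varphi_N:B\to X_N$ is $I_N B = BI_N$, and hence
$$(P_S)_N \;=\; Be/I_N Be \;=\; (B/I_N B)\bar e \;=\; X_N\bar e,$$
exhibiting $(P_S)_N$ already as a projective $X_N$-module.

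Non-vanishing is immediate from the reduction on $\mathcal{N}$ made just before the lemma: since $N$ acts trivially on $S$ we have $S_N=S$, and right exactness of $(-)_N$ applied to the surjection $P_S\twoheadrightarrow S$ produces a surjection $(P_S)_N\twoheadrightarrow S\neq 0$; in particular $\bar e\neq 0$. For indecomposability I would pass to endomorphism rings. The surjection $\varphi_N$ restricts to a surjective ring homomorphism
$$eBe\;\cong\;\End_B(P_S)\;\twoheadrightarrow\;\bar e X_N\bar e\;\cong\;\End_{X_N}((P_S)_N).$$
Since $P_S$ is finitely generated and indecomposable, $\End_B(P_S)$ is a local pseudocompact ring (Krull--Schmidt for pseudocompact modules, cf.\ \cite{Bru}). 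Any non-zero quotient of a local ring is local, and the previous paragraph shows $\bar e X_N\bar e\neq 0$, so $\bar e$ is a primitive idempotent of $X_N$, so $(P_S)_N=X_N\bar e$ is indecomposable.

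The principal obstacle is justifying the two background facts I lean on: the representation $P_S=Be$ with $e$ primitive in $B$, and the locality of $\End_B(P_S)$. Both depend on $\tn{Rad}(B)$ being open in $B$ (equivalently, $B/\tn{Rad}(B)$ being finite dimensional) together with idempotent lifting in pseudocompact algebras. All of this is standard once Lemma \ref{lem:Ifinite} and Lemma \ref{lem:charactofradicalofamodule} are in hand, so the technical care in the write-up will be in citing the pseudocompact Krull--Schmidt statement cleanly rather than in proving anything substantially new.
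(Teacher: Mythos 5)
Your proof is correct but takes a genuinely different route from the paper's. The paper's argument stays at the module level: right exactness of $(-)_N$ gives the surjection $P_N \twoheadrightarrow S_N = S \neq 0$, and Lemma~\ref{lem:charactofradicalofamodule} yields a surjection $P/\tn{Rad}(P) \twoheadrightarrow P_N/\tn{Rad}(P_N)$; since the source is simple, so is $P_N/\tn{Rad}(P_N)$, and a projective $B_N$-module with simple top is indecomposable. Your argument instead lifts a primitive idempotent to realize $P_S = Be$, passes through $(P_S)_N = B_N\bar e$, and deduces indecomposability from the fact that the local ring $eBe = \End_B(P_S)$ surjects onto $\bar e B_N \bar e = \End_{B_N}((P_S)_N)$, whose nonzero quotient is again local, making $\bar e$ primitive. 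Both are sound, but the paper's route is shorter and leans only on machinery already set up in Section~2, whereas yours imports idempotent lifting and a Krull--Schmidt-type locality statement for pseudocompact modules, which the paper never needs to invoke. One small point of notational care: you write $(P_S)_N = X_N\bar e$, but the kernel of $\varphi_N : B \to B_N$ is $I_N B$, so what you have directly is $B_N\bar e$; at this stage of the paper $B_N$ need not equal the block $X_N$ (that is only arranged later, in Lemma~\ref{lem:B_Nisablock}). This does not damage your argument—replacing $X_N$ by $B_N$ throughout works verbatim—but the identification you assert is not available yet.
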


\begin{proof}
	Let $\pi:P\to S$ be the canonical projection.  The functor $(-)_N$ is right exact, and so we obtain a surjective map $\pi_N : P_N \to S_N = S$, and so $P_N\neq 0$.  By Lemma \ref{lem:charactofradicalofamodule}, $P/\tn{Rad}(P)$ surjects onto $P_N/\tn{Rad}(P_N)$.  But the former is simple by the first paragraph of this section, and so $P_N/\tn{Rad}(P_N)$ is simple.  It follows that $P_N$ is indecomposable.
\end{proof}

\begin{lem}\label{lem:B_Nisablock} 
	Let $B$ be a block of $G$ with cyclic defect group $D$. There is $N_{0}\trianglelefteq_{O}G$ acting trivially on each $S\in\mathcal{S}$ and such that  $B_{N}$ is a block for each $N\leq N_{0}$. 
\end{lem}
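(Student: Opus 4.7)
The plan is to apply \cite[Corollary 4.9]{JR} to the finite set $\mathcal{S}$ (finite by Lemma \ref{lem:Ifinite}), obtaining some $N_0\in\mathcal{N}$ such that every $S\in\mathcal{S}$ appears as a distinct simple $X_{N_0}$-module.  By our standing convention on $\mathcal{N}$, this $N_0$ acts trivially on each $S\in\mathcal{S}$.

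My first step is to show that $B_{N_0}=X_{N_0}$, hence is a block.  A simple $B_{N_0}$-module inflates to a simple $B$-module on which $N_0$ acts trivially, so lies in $\mathcal{S}$; conversely each $S\in\mathcal{S}$ is a simple $B_{N_0}$-module since $N_0$ acts trivially on $S$.  Thus the set of simple $B_{N_0}$-modules is exactly $\mathcal{S}$.  Similarly, distinct simple $X_{N_0}$-modules inflate to distinct simple $B$-modules, giving $|\mathcal{S}_{N_0}|\leq|\mathcal{S}|$, with equality by the choice of $N_0$.  Since $X_{N_0}$ is a block summand of the product-of-blocks decomposition of $B_{N_0}$ as an algebra, and both share the same simples, they must coincide.

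For arbitrary open normal $N\leq N_0$, I would verify that $B_N$ is a block by tracking central idempotents.  Write $B_N=Y_1\times\cdots\times Y_r$ with each $Y_i$ a block of $k[G/N]$ with block idempotent $f_i$, so that $e_N:=\varphi_N(e)=f_1+\cdots+f_r$.  Arguing as above, the simple $B_N$-modules are exactly $\mathcal{S}$, so each $Y_i$ contains at least one $S_i\in\mathcal{S}$.  Consider the algebra projection $\pi: k[G/N]\to k[G/N_0]$, which sends $e_N\mapsto e_{N_0}$ and so yields a decomposition $e_{N_0}=\sum_i\pi(f_i)$ of $e_{N_0}$ into pairwise orthogonal central idempotents of $k[G/N_0]$.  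The key observation is that the $k[G/N]$-action on $S_i$ factors through $\pi$ (since $N_0$ acts trivially on $S_i$), so because $f_i$ acts as the identity on $S_i$, so does $\pi(f_i)$.  As every central idempotent of $k[G/N_0]$ is a sum of block idempotents (the center of any block being local), $\pi(f_i)$ must have the block idempotent $e_{N_0}$ of $X_{N_0}=B_{N_0}$ as one of its summands for \emph{every} $i$.  Pairwise orthogonality of the $\pi(f_i)$ is then incompatible with $r\geq 2$, forcing $r=1$ and hence $B_N=Y_1$ is a block.

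I expect the main subtlety to lie in this final central-idempotent-tracking step; the first two paragraphs reduce to careful bookkeeping around inflation and coinvariants, whereas the cyclic-defect hypothesis enters only through the finiteness of $\mathcal{S}$ and the availability of \cite[Corollary 4.9]{JR}.
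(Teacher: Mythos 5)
Your proposal is correct and uses the same two key ingredients as the paper: the finiteness of $\mathcal{S}$ from Lemma \ref{lem:Ifinite}, and \cite[Corollary 4.9]{JR} to produce $N_0\in\mathcal{N}$ with all of $\mathcal{S}$ appearing as simple $X_{N_0}$-modules, at which point $X_{N_0}=B_{N_0}$ because each block of $B_{N_0}$ must contain a simple module. The only divergence is in handling general $N\leq N_0$: the paper dispatches this with ``the same holds for any $N\leq N_0$,'' the point being that $X_N\onto X_{N_0}=B_{N_0}$, so all of $\mathcal{S}$ already inflates into the single block $X_N$ and the blocks-contain-a-simple argument applies verbatim; you instead run an explicit central-idempotent computation showing that if $B_N$ had several block idempotents $f_i$, each $\pi(f_i)$ would be forced to have $e_{N_0}$ as a summand, violating orthogonality. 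Both routes are sound and rest on the same underlying observation (every block of $B_N$ must see a simple from $\mathcal{S}$, yet they all live over $B_{N_0}$); your version spells out the bookkeeping that the paper leaves implicit, at the cost of a bit more machinery.
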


\begin{proof}
	By Lemma \ref{lem:Ifinite}, $\mathcal{S}$ is finite, and hence by \cite[Corollary 4.9]{JR} there is $N_0\in \mathcal{N}$ such that each simple module lies in $Z_{N_0}$.  But each block of $B_{N_0}$ contains at least one simple module, and hence $Z_{N_0} = B_{N_0}$.  The same holds for any $N\leqslant N_0$.
\end{proof}

\begin{lem}\label{lem:multiplicityofPi}
	Fix $P = P_S\in\mathcal{P}$. The multiplicity of $P$ as a factor of $B$ is finite.
\end{lem}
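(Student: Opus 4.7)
The plan is to prove that $B/\tn{Rad}(B)$ is finite-dimensional and then deduce finiteness of the multiplicity of $P = P_S$ as a factor of $B$ by relating it to the multiplicity of $S$ in $B/\tn{Rad}(B)$.

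First I would show $B/\tn{Rad}(B) \cong \varprojlim_{N\in\mathcal{N}} B_N/\tn{Rad}(B_N)$ by applying the block-coinvariant analog of Lemma \ref{lem:quotientpseudocompactmodule} with $V = \tn{Rad}(B)$ and $W_N = \tn{Rad}(B_N)$: the required inclusions follow because $\varphi_N$ and $\varphi_{MN}$ are algebra homomorphisms (cf. Remark \ref{obs:invsystemforcoinvariants}), and the inverse-limit condition uses (the analog of) Lemma \ref{lem:charactofradicalofamodule}(1).

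By Lemma \ref{lem:B_Nisablock} together with our standing hypothesis that each $N\in\mathcal{N}$ acts trivially on every simple of $B$, for each sufficiently small $N\in\mathcal{N}$ the coinvariant $B_N$ is a finite-dimensional block of $k[G/N]$ whose set of simple modules is exactly $\mathcal{S}$. The Wedderburn-Artin theorem then gives
$$B_N/\tn{Rad}(B_N) \cong \prod_{S'\in\mathcal{S}} \End_k(S'),$$
a fixed finite-dimensional semisimple algebra independent of $N$. The transition maps are surjections of finite-dimensional algebras of the same dimension, hence isomorphisms, so $B/\tn{Rad}(B)$ is itself finite-dimensional, isomorphic to $\prod_{S'\in\mathcal{S}}\End_k(S')$.

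Finally, writing $B \cong \prod_{S'\in\mathcal{S}} P_{S'}^{m_{S'}}$ as a pseudocompact $B$-module via the Krull-Schmidt decomposition for pseudocompact projective modules (so that $m_{S'}$ is the multiplicity being studied) and passing to the radical quotient yields $B/\tn{Rad}(B) \cong \prod_{S'\in\mathcal{S}} (S')^{m_{S'}}$. Comparing with the Wedderburn decomposition gives $m_S = \dim_k S$, which is finite. The principal obstacle will be making this last step rigorous in the pseudocompact setting: one must verify that the radical distributes over the possibly infinite product decomposition of $B$ and that Krull-Schmidt applies to yield a well-defined multiplicity.
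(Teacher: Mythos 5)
Your proposal is correct in outline but takes a genuinely different route from the paper. The paper's proof is short and direct: it cites Lemma \ref{lem:PN indec} to see that $P_N$ is the projective cover of $S$ for every $N$, invokes a standard finite-group theorem to get that the multiplicity of $P_N$ as a summand of $B_N$ is $\dim_k(S)$, and concludes that $P$ has the same multiplicity as a factor of $B$. Your argument instead works with the radical quotient: you show $B/\tn{Rad}(B)\cong\varprojlim_N B_N/\tn{Rad}(B_N)$, identify each $B_N/\tn{Rad}(B_N)$ as the fixed semisimple algebra $\prod_{S'\in\mathcal{S}}\End_k(S')$ via Wedderburn (possible because $\mathcal{S}=\mathcal{S}_N$ for small enough $N$ and $k$ is algebraically closed), note that the transition maps are surjections between finite-dimensional algebras of equal dimension and hence isomorphisms, and then compare the resulting finite-dimensional $B/\tn{Rad}(B)$ against $\prod_{S'}(S')^{m_{S'}}$. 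Both arguments land on $m_S=\dim_k S$; yours is more self-contained (it re-derives the finite block decomposition from Wedderburn) while the paper's leans on the cited finite theorem and is correspondingly terse. As you note, the price of your route is that you must actually justify, in the pseudocompact setting, that $B$ admits a Krull--Schmidt-type decomposition $\prod_{S'}P_{S'}^{m_{S'}}$ as a left module over itself and that passing to $B/\tn{Rad}(B)$ distributes over this (possibly infinite) product; the paper elides a similar point when it asserts the multiplicity in $B$ equals that in $B_N$. Also, be careful that the inverse-limit identity $B/\tn{Rad}(B)\cong\varprojlim_N B_N/\tn{Rad}(B_N)$ is not literally an application of Lemma \ref{lem:charactofradicalofamodule} as written, since that lemma concerns the module radical while you need the algebra radical of $B$; the two do coincide (the Jacobson radical of a finite-dimensional algebra is also the intersection of its maximal two-sided ideals, which matches the $k\db{G\times G}$-module radical), but that identification deserves a sentence.
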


\begin{proof}
	By Lemma \ref{lem:PN indec}, $P_N$ is the projective cover of $S_N = S$ for each $N\in\mathcal{N}$.  The multiplicity of $P_N$ as a summand of $B_N$ is $\tn{dim}_{k}(S)$ by \cite[Theorem 7.3.9]{Web}, 
	
	and so $P$ has the same multiplicity as a factor of $B$.
\end{proof}

\begin{lem}\label{lem:Rad Pi over Soc Pi}
	For each $P\in \mathcal{P}$,
	$\tn{Rad}(P)/\tn{Soc}(P)\cong\varprojlim_{N}\tn{Rad}(P_{N})/\tn{Soc}(P_{N}).$
\end{lem}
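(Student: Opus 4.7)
The plan is to realize the claimed isomorphism as the middle term of an inverse limit of short exact sequences, leveraging the exactness of inverse limits in the category of pseudocompact $k\db{G}$-modules.

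The first step is to verify that $\tn{Soc}(P_N) \subseteq \tn{Rad}(P_N)$ for each $N$ in a cofinal subsystem of $\mathcal{N}$. Since $D$ is non-trivial, the set of $N \in \mathcal{N}$ with $D \not\subseteq N$ is cofinal, and for such $N$ the block $X_N = B_N$ has non-trivial cyclic defect group $DN/N$ and is therefore not semisimple; in particular no indecomposable projective $X_N$-module is simple. Since $P_N$ is indecomposable by Lemma \ref{lem:PN indec}, it is non-simple and local, which forces its socle inside its radical. After restricting $\mathcal{N}$ to this subsystem (which does not alter the inverse limits computed earlier), the hypothesis is in place.

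Next, for each such $N$ I will write down the short exact sequence
$$0 \longrightarrow \tn{Soc}(P_N) \longrightarrow \tn{Rad}(P_N) \longrightarrow \tn{Rad}(P_N)/\tn{Soc}(P_N) \longrightarrow 0$$
of finite-dimensional $k[G/N]$-modules. The canonical transition maps $\varphi_{MN}$ carry radicals to radicals (Lemma \ref{lem:charactofradicalofamodule}) and socles to socles, so these assemble into a short exact sequence of inverse systems of pseudocompact $k\db{G}$-modules. Applying $\varprojlim_N$, which is exact, and using Lemma \ref{lem:charactofradicalofamodule}(1) to identify $\varprojlim_N \tn{Rad}(P_N) = \tn{Rad}(P)$ together with Lemma \ref{lem:charactofsocleofamodule} for $\varprojlim_N \tn{Soc}(P_N) = \tn{Soc}(P)$, I obtain the short exact sequence
$$0 \longrightarrow \tn{Soc}(P) \longrightarrow \tn{Rad}(P) \longrightarrow \varprojlim_{N} \tn{Rad}(P_N)/\tn{Soc}(P_N) \longrightarrow 0,$$
from which the claimed isomorphism is immediate.

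The main obstacle, though mild, will be the verification that $\tn{Soc}(P_N) \subseteq \tn{Rad}(P_N)$, which genuinely relies on the non-triviality of $D$. An alternative route, more in the spirit of the paper's other proofs, would be to invoke Lemma \ref{lem:quotientUoverSocUasinverselimit} for $P$ and then restrict the resulting isomorphism to the submodule $\tn{Rad}(P)/\tn{Soc}(P)$ of $P/\tn{Soc}(P)$, using Lemma \ref{lem:charactofradicalofamodule} to check that its image inside $\varprojlim_N P_N/\tn{Soc}(P_N)$ is precisely $\varprojlim_N \tn{Rad}(P_N)/\tn{Soc}(P_N)$; either approach should work.
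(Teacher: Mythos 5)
Your proof is correct and takes essentially the same approach as the paper: both first establish $\tn{Soc}(P_N)\subseteq\tn{Rad}(P_N)$ (the paper by citing a standard result on blocks of positive defect; you by arguing directly from non-triviality of $DN/N$, which is fine since a simple projective module would force $B_N$ to have defect zero) and then pass to the limit, you via exactness of $\varprojlim$ on the short exact sequences $0\to\tn{Soc}(P_N)\to\tn{Rad}(P_N)\to\tn{Rad}(P_N)/\tn{Soc}(P_N)\to 0$, the paper via Lemma \ref{lem:quotientpseudocompactmodule}, which packages the identical calculation. Your alternative route through Lemma \ref{lem:quotientUoverSocUasinverselimit} would also work.
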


\begin{proof}
	Note that by \cite[Corollary 6.3.4]{B} and our hypotheses on $N\in \mathcal{N}$, the indecomposable projective $B_N$-modules are not simple, so that in particular $\tn{Soc}(P_N)\leqslant \tn{Rad}(P_N)$.  The result now follows by applying Lemma \ref{lem:quotientpseudocompactmodule} with $U=\tn{Rad}(P),\ V=\tn{Soc}(P)$ and $W_{N}=\tn{Soc}(P_{N})$.
\end{proof}

\subsection{Indecomposable projective modules of blocks with cyclic defect group}

We maintain the notation fixed in the previous section.  We denote by $\mathcal{N}$ a cofinal set of open normal subgroups such that each $N\in \mathcal{N}$ acts trivially on every simple $B$-module, and such that $B_N$ is a block with defect group $DN/N$.  Observe that under these conditions, $\mathcal{S}$ can be canonically identified with $\mathcal{S}_{N}$, via the map sending $S\in \mathcal{S}$ to $S_N \iso S$.

Recall that a finite dimensional module is \emph{uniserial} if it has a unique composition series.  We call a pseudocompact $k\db{G}$-module $U$ \emph{pro-uniserial} if $U_N$ is uniserial for each $N\in \mathcal{N}$.  A simple $k\db{G}$-module is a \emph{composition factor} of $U$ if it is a composition factor of some $U_N$.

\begin{lem}\label{lem:tecnpropertyaoboutPi}
	Fix $P\in\mathcal{P}$. Then $\tn{Rad}(P)/\tn{Rad}^{2}(P) = T\oplus T'$, where $T,T'$ are non-isomorphic simple modules or zero.
\end{lem}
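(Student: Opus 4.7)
The plan is to invoke the classical finite-group structure theorem for Brauer tree algebras at each finite level $N$ and then pass to the inverse limit. By Lemma \ref{lem:charactofradicalofamodule},
$$\tn{Rad}(P)/\tn{Rad}^{2}(P)\cong\varprojlim_{N\in\mathcal{N}}\tn{Rad}(P_{N})/\tn{Rad}^{2}(P_{N}),$$
and the same lemma (together with right exactness of $(-)_{N}$) ensures that the transition maps $\varphi_{MN}$ in this inverse system are surjective.  For each $N\in\mathcal{N}$, the algebra $B_{N}$ is a block of the finite group $G/N$ with cyclic defect group $DN/N$ (Lemma \ref{lem:B_Nisablock}), and $P_{N}$ is an indecomposable projective $B_{N}$-module (Lemma \ref{lem:PN indec}). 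By the structure theory of Brauer tree algebras for finite blocks of cyclic defect (for instance \cite[Theorems 11.1.1 and 11.1.3]{lin2}), $\tn{Rad}(P_{N})/\tn{Rad}^{2}(P_{N}) = T_{N}\oplus T_{N}'$ where $T_{N},T_{N}'$ are non-isomorphic simple $B_{N}$-modules or zero.

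The next step is to show the inverse system stabilizes.  Because every $N\in\mathcal{N}$ acts trivially on each simple $B$-module, the simples of $B_{N}$ are canonically identified with $\mathcal{S}$, so the $T_{N},T_{N}'$ are naturally elements of $\mathcal{S}\cup\{0\}$.  For each $S\in\mathcal{S}$ and $N\in\mathcal{N}$, let $n_{S}(N)\in\{0,1\}$ denote the multiplicity of $S$ in $\tn{Rad}(P_{N})/\tn{Rad}^{2}(P_{N})$.  Since a surjection of semisimple modules cannot strictly decrease the multiplicity of any simple in the target, $n_{S}(N)\geq n_{S}(M)$ whenever $N\leq M$.  Each $n_{S}$ is therefore monotone and bounded by $1$ as $N$ decreases through $\mathcal{N}$, so the set $\{N\in\mathcal{N}:n_{S}(N)=1\}$ is either empty or a cofinal ideal.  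As $\mathcal{S}$ is finite (Lemma \ref{lem:Ifinite}) and $\mathcal{N}$ is filtered, we may pick $N_{1}\in\mathcal{N}$ below which every $n_{S}$ equals its eventual value.

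Finally, for $N\leq N_{1}$ the surjection $\varphi_{N_{1}N}:\tn{Rad}(P_{N})/\tn{Rad}^{2}(P_{N})\to\tn{Rad}(P_{N_{1}})/\tn{Rad}^{2}(P_{N_{1}})$ is between semisimple modules of equal length with matching multiplicities of each simple, hence an isomorphism.  Since $\{N\in\mathcal{N}:N\leq N_{1}\}$ is cofinal, the inverse limit coincides with $\tn{Rad}(P_{N_{1}})/\tn{Rad}^{2}(P_{N_{1}})=T\oplus T'$, where $T=T_{N_{1}},T'=T_{N_{1}}'$ are non-isomorphic simples of $B$ or zero, as required.  The only delicate issue is coherently identifying simples across the different levels $N$, but this is built into the standing assumption on $\mathcal{N}$, and the remainder of the argument is routine limit bookkeeping.
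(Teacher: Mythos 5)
Your proof is correct and follows the same overall strategy as the paper: reduce to the finite blocks $B_N$, use the Brauer tree algebra structure there, and pass to the inverse limit using the surjectivity from Lemma~\ref{lem:charactofradicalofamodule}. Two small remarks. First, the citation is off: Theorems 11.1.1 and 11.1.3 of \cite{lin2} are the results the paper uses to bound $|\mathcal{S}_N|$; the statement you actually need -- that $\tn{Rad}(P_N)/\tn{Rad}^2(P_N)$ is a multiplicity-free sum of at most two simples -- is derived in the paper from Theorem 11.1.8 of \cite{lin2} (existence of the uniserial submodules $X_N$, $Y_N$ with $X_N + Y_N = \tn{Rad}(P_N)$, $X_N\cap Y_N = \tn{Soc}(P_N)$, and the description of maximal submodules), and the paper carries out a short case analysis (does one of $X_N$, $Y_N$ equal $\tn{Soc}(P_N)$ or not?) to extract the claim about the second radical layer. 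You bypass this by citing the consequence directly, which is legitimate provided you point to a statement that gives it cleanly. Second, your stabilization argument via the monotone multiplicities $n_S(N)$ is a nicer, uniform way to pass to the limit than the paper's case-by-case handling, and it is correct since the transition maps are surjections between semisimple modules over the fixed finite set $\mathcal{S}$, so eventual equality of lengths forces them to be isomorphisms.
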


\begin{proof}
	The projective $B_N$-module $P_N$ is indecomposable by Lemma \ref{lem:PN indec}, so by \cite[Theorem 11.1.8]{lin1}, there are two unique uniserial submodules $X_{N},Y_{N}$ of $P_{N}$ such that $\tn{Rad}(P_{N})=X_{N}+Y_{N}$ and  $\tn{Soc}(P_{N})=X_{N}\cap Y_{N}$. 
	
	We will analyze two cases.  First suppose that at least one of the modules $X_{N},Y_{N}$ is simple for every $N$. Assume without loss of generality that $Y_{N}$ is simple for each $N$. Then $Y_N = \tn{Soc}(X_N)$ and in particular $Y_{N}\subseteq X_{N}$.  But then since $X_N$ is uniserial, $X_N/\tn{Rad}(X_N) = \tn{Rad}(P_N)/\tn{Rad}^2(P_N)$ is a simple module, which we call $T_N$.  The inverse system $\tn{Rad}(P_N)/\tn{Rad}^2(P_N)$ as $N$ varies is surjective by Lemma \ref{lem:charactofradicalofamodule}, and hence $T_N$ does not depend on $N$ and the inverse limit $\tn{Rad}(P)/\tn{Rad}^2(P) = T\iso T_N$.  This completes the first case.
	
	Now suppose that $X_N$ and $Y_N$ are not simple for every $N$ in some cofinal system inside $\mathcal{N}$, so that $\tn{Soc}(P_N) = X_N\cap Y_N = \tn{Rad}(X_N)\cap \tn{Rad}(Y_N)$.  The surjective map 
	$$\gamma_N : X_N + Y_N \to (X_N/\tn{Rad}(X_N))\oplus (Y_N/\tn{Rad}(Y_N))$$ 
	sending $x+y$ to $(x + \tn{Rad}(X_N)\,,\, y + \tn{Rad}(Y_N))$ is well-defined because $X_N\cap Y_N = \tn{Rad}(X_N)\cap \tn{Rad}(Y_N)$.  The kernel of $\gamma_N$ is $\tn{Rad}(X_N)+\tn{Rad}(Y_N)$.
	
	But by \cite[Theorem 11.1.8]{lin1}, the only maximal submodules of $X_N+Y_N$ are $X_N + \tn{Rad}(Y_N)$ and $\tn{Rad}(X_N)+Y_N$, so that 
	$$\tn{Rad}^2(P_N) = \tn{Rad}(X_N+Y_N) = \tn{Rad}(X_N)+\tn{Rad}(Y_N).$$
	
	Putting all this together and denoting by $T_N,T_N'$ the simple modules $X_N/\tn{Rad}(X_N)$, $Y_N/\tn{Rad}(Y_N)$ respectively, we have
	\begin{eqnarray*}
		\tn{Rad}(P_N)/\tn{Rad}^2(P_N) &=& (X_N + Y_N)/\tn{Rad}(X_N+Y_N)\\
		&=& (X_N+Y_N)/(\tn{Rad}(X_N)+\tn{Rad}(Y_N))\\
		&\iso& T_N\oplus T_N'.
	\end{eqnarray*}
	
	The simple modules $T_N, T_N'$ are non-isomorphic by \cite[Theorem 11.1.8]{lin2}.  As in the first case, taking limits we see that $\tn{Rad}(P)/\tn{Rad}^2(P) = T\oplus T'\iso T_N\oplus T_N'$.
\end{proof}

\begin{prop}\label{prop:existenceofpro-uniserialsubmodules}
	Fix $P\in\mathcal{P}$. There are unique pro-uniserial submodules $X$, $Y$ of $P$ satisfying the following properties:
	\begin{itemize}
		\item[1.] $X\cap Y=\tn{Soc}(P)$.
		\item[2.] $X+Y=\tn{Rad}(P)$.
		\item[3.]$\frac{\tn{Rad}(P)}{\tn{Soc}(P)}\cong\frac{X}{\tn{Soc}(P)}\oplus\frac{Y}{\tn{Soc}(P)}$, and the modules $\frac{X}{\tn{Soc}(P)}\mbox{, } \frac{Y}{\tn{Soc}(P)}$ have no composition factors in common.
	\end{itemize}
\end{prop}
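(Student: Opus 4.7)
\emph{Proof proposal.}
The plan is to apply \cite[Theorem 11.1.8]{lin1} at each finite level $N \in \mathcal{N}$ and pass to the inverse limit. For each $N$ that result supplies unique uniserial submodules $X_{N}, Y_{N} \subseteq P_{N}$ with $\tn{Rad}(P_{N}) = X_{N} + Y_{N}$ and $\tn{Soc}(P_{N}) = X_{N} \cap Y_{N}$, and by the proof of Lemma \ref{lem:tecnpropertyaoboutPi} their heads are fixed simple $B$-modules $T, T'$ (one possibly zero) that do not depend on $N$. Label $X_{N}, Y_{N}$ accordingly.

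The crucial step is to show that $\varphi_{MN}|_{X_{N}}$ surjects onto $X_{M}$ (and analogously for $Y$), so that $\{X_{N}\}$ forms a surjective inverse system. By Lemma \ref{lem:charactofradicalofamodule}, $\varphi_{MN}$ induces a surjection $\tn{Rad}(P_{N})/\tn{Rad}^{2}(P_{N}) \to \tn{Rad}(P_{M})/\tn{Rad}^{2}(P_{M})$ between modules both isomorphic to $T \oplus T'$, hence an isomorphism, which preserves the $T$-summand since $\tn{Hom}(T,T') = 0$. Identifying the $T$-summand on level $N$ with $X_{N}/\tn{Rad}(X_{N})$ (as in the proof of Lemma \ref{lem:tecnpropertyaoboutPi}), we deduce that $\varphi_{MN}(X_{N})$ is a uniserial submodule of $\tn{Rad}(P_{M})$ with head $T$, so by the uniqueness in \cite[Theorem 11.1.8]{lin1}, $\varphi_{MN}(X_{N}) \subseteq X_{M}$. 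Surjectivity is then automatic: $\varphi_{MN}(X_{N})$ is a submodule of the uniserial $X_{M}$ not contained in $\tn{Rad}(X_{M})$, so it equals $X_{M}$. Using this surjectivity, choose compatible generators $x_{N} \in X_{N} \setminus \tn{Rad}(X_{N})$ via the standard inverse-limit-of-nonempty-compact-sets argument, set $x = (x_{N})_{N} \in P$, and define $X := k\db{G} \cdot x$ (closed and cyclic); build $Y$ analogously. The $N$-coinvariant of $X$ is the cyclic $k[G/N]$-module $k[G/N] \cdot x_{N} = X_{N}$, so $X$ is pro-uniserial.

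Properties (1) and (2) follow by passing the corresponding finite-level identities to the limit. For (1), $\varphi_{N}(X \cap Y) \subseteq X_{N} \cap Y_{N} = \tn{Soc}(P_{N})$ yields $X \cap Y \subseteq \varprojlim_{N}\tn{Soc}(P_{N}) = \tn{Soc}(P)$ by Lemma \ref{lem:charactofsocleofamodule}, while $\tn{Soc}(P_{N}) \subseteq X_{N}, Y_{N}$ at each level gives the reverse inclusion. For (2), right-exactness of $(-)_{N}$ and Lemma \ref{lem:charactofradicalofamodule} yield $(\tn{Rad}(P)/(X+Y))_{N} = \tn{Rad}(P_{N})/(X_{N}+Y_{N}) = 0$ for each $N$, so $\tn{Rad}(P)/(X+Y) = 0$ by Proposition \ref{prop:moduleasinvslimofcoinvar}. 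Property (3) is then formal, and the composition-factor disjointness follows from Lemma \ref{lem:quotientpseudocompactmodule} applied with $U = X$, $V = \tn{Soc}(P)$, $W_{N} = \tn{Soc}(P_{N})$ — yielding $X/\tn{Soc}(P) \cong \varprojlim_{N} X_{N}/\tn{Soc}(P_{N})$ — combined with the level-wise disjointness in \cite[Theorem 11.1.8]{lin1}. For uniqueness, any pro-uniserial $X', Y'$ satisfying (1)-(3) have uniserial $N$-coinvariants with sum $\tn{Rad}(P_{N})$, which by \cite[Theorem 11.1.8]{lin1} forces $\{X'_{N}, Y'_{N}\} = \{X_{N}, Y_{N}\}$; the labeling is pinned down by the induced $T$-summand in $\tn{Rad}/\tn{Rad}^{2}$, and Proposition \ref{prop:moduleasinvslimofcoinvar} gives $X' = X$ and $Y' = Y$. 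The main technical delicacy is matching the abstract $N$-coinvariant of $X$ with the Linckelmann submodule $X_{N} \subseteq P_{N}$, which is precisely the role played by the cyclic-generator construction.
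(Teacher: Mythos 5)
Your overall strategy — apply Linckelmann's theorem \cite[Theorem 11.1.8]{lin2} at each finite level $N\in\mathcal{N}$, build $X_N$ and $Y_N$ there, and pass to the inverse limit — is exactly the route the paper takes. The content of your ``crucial step'' (the transition maps respect $X_N$ because $\varphi_{MN}$ induces an isomorphism on $\tn{Rad}(P_N)/\tn{Rad}^2(P_N)$ that preserves the $T$-summand) is also the paper's justification, stated more tersely there.

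There is, however, a genuine gap: you never handle the degenerate case $T' = 0$, and your argument ``and analogously for $Y$'' does not go through when $T'=0$. In that case $Y_N = \tn{Soc}(P_N)$ is simple for every $N$ (this is exactly the first case in the proof of Lemma \ref{lem:tecnpropertyaoboutPi}), so ``$\varphi_{MN}(Y_N)$ has head $T'$'' says nothing. Worse, when $D$ is infinite the Loewy length of $P_N$ is unbounded in $N$, so $\varphi_{MN}$ kills $\tn{Soc}(P_N)$ for $N\ll M$: the system $\{Y_N\}$ is \emph{not} surjective, no compatible element $y=(y_N)$ with $y_N\in Y_N\setminus\tn{Rad}(Y_N)$ exists, and $Y:=\overline{k\db{G}y}$ cannot be formed. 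The paper sidesteps this by treating the case separately: it puts $X_N:=\tn{Rad}(P_N)$, $Y_N:=\tn{Soc}(P_N)$, so that $Y=\tn{Soc}(P)$ (which is in fact $0$ when $D=\mathbb{Z}_p$), and the three properties then hold for trivial reasons.

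Two smaller remarks. First, the ``inverse-limit-of-nonempty-compact-sets'' step is not quite available as stated: over an infinite $k$ the sets $X_N\setminus\tn{Rad}(X_N)$ are infinite discrete, and for a general profinite $G$ the index poset $\mathcal{N}$ need not be countable, so the existence of a compatible generator is not a formal consequence of surjectivity. This is easily avoided by taking $X:=\varprojlim_N X_N$ directly (as the paper does); surjectivity then gives $\varphi_N(X)=X_N$ by \cite[IV.\S3, Lemma 1]{G}, and the generator is not needed. Second, your identification ``the $N$-coinvariant of $X$ equals $k[G/N]\cdot x_N=X_N$'' conflates the coinvariant $X/I_NX$ with the image $\varphi_N(X)=X/(X\cap I_NP)$; for a closed submodule $X\leq P$ one only has a surjection $X/I_NX\twoheadrightarrow \varphi_N(X)$, not equality in general. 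Neither of these derails the strategy, but the $T'=0$ case does need to be added.
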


\begin{proof}
	By Lemma \ref{lem:tecnpropertyaoboutPi}, $\tn{Rad}^{2}(P)$ is open in $P$, so we work within the cofinal system of those $N\in\mathcal{N}$ such that $I_{N}P\subseteq \tn{Rad}^{2}(P)$. By Lemma \ref{lem:tecnpropertyaoboutPi}, $\tn{Rad}(P)/\tn{Rad}^{2}(P)\cong T\oplus T'$, where $T,T'$ are non-isomorphic simple modules or zero. If one of them is 0, let it be $T'$.  In this case, set $X_{N}$ to be $\tn{Rad}(P_{N})$ and $Y_{N}$ to be $\tn{Soc}(P_{N})$. If $T,T'$ are both non-zero, then for each $N$ in the cofinal system, let $X_{N}$ be the maximal uniserial submodule of $\tn{Rad}(P_{N})$ such that $X_{N}/\tn{Rad}(X_{N})\cong T$ and let $Y_{N}$ be the maximal uniserial submodule of $\tn{Rad}(P_{N})$ such that $Y_{N}/\tn{Rad}(Y_{N})\cong T'$. By \cite[Theorem 11.1.8]{lin2}, $X_{N}$ and $Y_{N}$ are the unique submodules with this property.
	
	Since $\varphi_{MN}$ sends $\tn{Rad}(P_{N})$ onto $\tn{Rad}(P_{M})$ by Lemma \ref{lem:charactofradicalofamodule}, then $\varphi_{MN}(X_{N})=X_{M}$ and $\varphi_{MN}(Y_{N})\subseteq Y_{M}$ by the construction of $X_N, Y_N$ and using that $T,T'$ are non-isomorphic. So we may define $X:=\varprojlim_{N}\{X_{N},\varphi_{MN}\}$ and $Y:=\varprojlim_{N}\{Y_{N},\varphi_{MN}\}$.  We must check that $X$ and $Y$ satisfy Properties 1,2 and 3.  But by \cite[Theorem 11.1.8]{lin2}, the three properties are satisfied when we replace $X,Y,\tn{Rad}(P)$ and $\tn{Soc}(P)$ with $X_N,Y_N,\tn{Rad}(P_N)$ and $\tn{Soc}(P_N)$, respectively, and hence they also hold for  $X,Y,\tn{Rad}(P)$ and $\tn{Soc}(P)$ by taking limits. 
\end{proof}

We make two observations that will be required as we proceed:
\begin{itemize}
	\item Proposition \ref{prop:existenceofpro-uniserialsubmodules} implies in particular that $\tn{Rad}^{n}(P)$ is open in $P$ for each $n\in \mathbb{N}$ and $P\in \mathcal{P}$.
	\item From the proof of Proposition \ref{prop:existenceofpro-uniserialsubmodules} it follows that the maps $X_{N}\to X_{M}$ are surjective whenever $N\leq M$ and hence that the maps $\frac{X_{N}}{\tn{Soc}(P_{N})}\to\frac{X_{M}}{\tn{Soc}(P_{M})}$ are surjective. The maps $Y_{N}\to Y_{M}$ are surjective except perhaps when $Y_{N}=\tn{Soc}(P_{N})$. But in this case $\frac{Y_{N}}{\tn{Soc}(P_{N})}=\frac{Y_{M}}{\tn{Soc}(P_{M})}=0$, so the map $\frac{Y_{N}}{\tn{Soc}(P_{N})}\to\frac{Y_{M}}{\tn{Soc}(P_{M})}$ is surjective anyway. 
\end{itemize}

When $G$ is finite, the block $B$ has more structure. Namely, there are permutations $\rho,\sigma$ of $\mathcal{S}$ and the uniserial submodules $X,Y$ of each $P = P_S$ can be renamed $U_{S}$ and $V_{S}$ in a  such way that the distinct composition factors of $U_{S}$ are given by the $\rho$-orbit of $S$ as we descend the (unique) composition series of $U_{S}$, and similarly the distinct composition factors of $V_{S}$ are given by the $\sigma$-orbit of $S$. We lift this structure to profinite groups.



If ever $W$ is a pseudocompact $B$-module, denote by $\tn{Fac}(W)\subseteq\mathcal{S}$ the set of distinct representatives of the isomorphism classes of composition factors of $W$.  The following proposition is a pseudocompact version of \cite[Theorem 11.1.8]{lin2}:

\begin{prop}\label{prop:existenceofpro-uniserialsubmodulesUiVi}
	Denote by $X_S, Y_S$ the submodules of $P = P_S$ constructed in Proposition \ref{prop:existenceofpro-uniserialsubmodules}.
	There are two permutations $\rho,\sigma$ of $\mathcal{S}$  with the following property.  For each $S\in \mathcal{S}$, the submodules $X_S,Y_S$ can be renamed $U_{S},V_{S}$ in a such way that:
	\begin{itemize}
		\item the first $|\langle\rho\rangle\cdot S|$ composition factors of $U_{S}$ from the top are
		$$\frac{U_{S}}{\tn{Rad}(U_{S})}\cong \rho(S),\ \frac{\tn{Rad}(U_{S})}{\tn{Rad}^{2}(U_{S})}\cong \rho^{2}(S),\hdots ,\ \frac{\tn{Rad}^{|\langle\rho\rangle\cdot S|-1}(U_{S})}{\tn{Rad}^{|\langle\rho\rangle\cdot S|}(U_{S})}\cong \rho^{|\langle\rho\rangle\cdot S|}(S)= S,$$
		
		\item  the first $|\langle\sigma\rangle\cdot S|$ composition factors of $V_{S}$ from the top are
		$$\frac{V_{S}}{\tn{Rad}(V_{S})}\cong \sigma(S),\ \frac{\tn{Rad}(V_S)}{\tn{Rad}^{2}(V_{S})}\cong \sigma^{2}(S),\hdots ,\\ \frac{\tn{Rad}^{|\langle\sigma\rangle\cdot S|}-1(V_{S})}{\tn{Rad}^{|\langle\sigma\rangle\cdot S|}(V_{S})}\cong \sigma^{|\langle\sigma\rangle\cdot S|}(S)= S,$$
	\end{itemize}
	where $|\langle\rho\rangle\cdot S|,\ |\langle\sigma\rangle\cdot S|$ denote the sizes of the $\rho$-orbit $\langle\rho\rangle\cdot S$ of $S$ and the $\sigma$-orbit $\langle\sigma\rangle\cdot S$ of $S$. 
\end{prop}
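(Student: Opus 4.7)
The plan is to reduce to the finite-group analogue, \cite[Theorem 11.1.8]{lin2}, applied block by block to the finite blocks $B_N$ ($N\in\mathcal{N}$), and to pass to the inverse limit using the canonical construction already carried out in Proposition \ref{prop:existenceofpro-uniserialsubmodules}. For each $S\in\mathcal{S}$ and each $N\in\mathcal{N}$, write $X_{S,N}, Y_{S,N}$ for the uniserial submodules of $\tn{Rad}(P_{S,N})$ constructed in that proof. The finite theorem, applied to the cyclic-defect block $B_N$, yields permutations $\rho_N,\sigma_N$ of $\mathcal{S}_N\cong\mathcal{S}$ and, for each $S$, two uniserial submodules of $\tn{Rad}(P_{S,N})$ whose successive composition factors from the top realize the $\rho_N$- and $\sigma_N$-orbits of $S$. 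By the uniqueness part of \cite[Theorem 11.1.8]{lin2}, these agree, up to the choice of which is labeled ``$U$'' and which ``$V$'', with $X_{S,N}$ and $Y_{S,N}$.

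To globalize, set $U_S := X_S$ and $V_S := Y_S$ as in Proposition \ref{prop:existenceofpro-uniserialsubmodules}. The simple tops $X_{S,N}/\tn{Rad}(X_{S,N})$ and $Y_{S,N}/\tn{Rad}(Y_{S,N})$ are independent of $N$: the transition maps $X_{S,N}\to X_{S,M}$ (and likewise for $Y_{S,N}$, modulo the degenerate case) are surjective by the observation following Proposition \ref{prop:existenceofpro-uniserialsubmodules}, forcing the simple tops to coincide along the inverse system. Define $\rho(S),\sigma(S)\in\mathcal{S}$ to be these common values, with $\sigma(S):=S$ in the degenerate case $V_S=\tn{Soc}(P_S)$ (matching the finite-case convention $|S^{\sigma}|=1$). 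Then by construction $\rho=\rho_N$ and $\sigma=\sigma_N$ as maps $\mathcal{S}\to\mathcal{S}$ for every $N\in\mathcal{N}$, so since each $\rho_N,\sigma_N$ is a permutation of the finite set $\mathcal{S}$ by the finite theorem, $\rho$ and $\sigma$ are permutations of $\mathcal{S}$.

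It remains to verify the composition-factor chains; the argument for $U_S$ is given below, and that for $V_S$ is identical. Since $U_S=\varprojlim_N X_{S,N}$ with surjective transition maps that send the radical onto the radical (by \cite[Proposition 9.15]{And}), the proof of Lemma \ref{lem:charactofradicalofamodule} adapted to this inverse system (in place of the coinvariant system) yields $\tn{Rad}^{i}(U_S)=\varprojlim_N \tn{Rad}^{i}(X_{S,N})$ and $\tn{Rad}^{i}(U_S)/\tn{Rad}^{i+1}(U_S)\cong\varprojlim_N \tn{Rad}^{i}(X_{S,N})/\tn{Rad}^{i+1}(X_{S,N})$. For $N$ cofinally small in $\mathcal{N}$, uniseriality of $X_{S,N}$ together with the finite theorem gives $\tn{Rad}^{i}(X_{S,N})/\tn{Rad}^{i+1}(X_{S,N})\cong\rho_N^{i+1}(S)=\rho^{i+1}(S)$ for each $0\leq i\leq|S^{\rho}|-1$, so the limit is the constant simple module $\rho^{i+1}(S)$, as claimed. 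The main obstacle I anticipate is securing a uniform labeling of $X$ versus $Y$ (equivalently $U$ versus $V$) across all $N$: the finite theorem by itself provides no canonical choice, and it is precisely the $N$-independent construction of $X_S,Y_S$ in Proposition \ref{prop:existenceofpro-uniserialsubmodules} that resolves this ambiguity and makes the inverse limit of the finite data well-defined.
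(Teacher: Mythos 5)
Your overall strategy (reduce to the finite theorem for each $B_N$, then pass to the limit) is the right one and matches the paper, and the composition-factor computation at the end is sound. But there is a genuine gap in the step where you define $\rho$ and $\sigma$ and claim that ``by construction $\rho=\rho_N$ and $\sigma=\sigma_N$''. Setting $U_S:=X_S$ and $V_S:=Y_S$ unconditionally is not legitimate, because the labeling of $X_S$ versus $Y_S$ in Proposition \ref{prop:existenceofpro-uniserialsubmodules} is chosen \emph{independently for each $S$}: nothing in that construction ties the choice of $T$ versus $T'$ for different projectives $P_S$ together in a way compatible with the bipartite structure of the Brauer tree. The finite theorem \cite[Theorem 11.1.8]{lin2} asserts the existence of a \emph{single, globally consistent} renaming $\{U_{S,N},V_{S,N}\}_{S\in\mathcal{S}}$ giving permutations $\rho_N,\sigma_N$; there is no reason the family $\{X_{S,N}\}_S$ coincides with $\{U_{S,N}\}_S$ rather than being a per-$S$ mix of $U_{S,N}$'s and $V_{S,N}$'s. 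If it is a mix, then $S\mapsto\operatorname{top}(X_S)$ fails to be a permutation (two edges can share the same ``$\rho$-top''), and your argument that $\rho$ is a permutation collapses. You did flag a labeling concern at the end, but you misdiagnosed it as consistency across $N$ (which \emph{is} resolved by the $N$-independent construction of $X_S,Y_S$); the unresolved issue is consistency across $S$.

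The paper handles this precisely: it fixes a single $N$ in the cofinal system, invokes the finite theorem for $B_N$ to obtain a globally consistent renaming $\{U_{S,N},V_{S,N}\}_S$ together with $\rho_N,\sigma_N$, and then \emph{rethreads the profinite labels through this one finite witness}, renaming $X_S$ as $U_S$ when $\varphi_N(X_S)={X_S}_N={U_S}_N$ and as $V_S$ when ${X_S}_N={V_S}_N$ (with the degenerate case $X_S=Y_S=\operatorname{Soc}(P_S)$ handled arbitrarily), and finally sets $\rho:=\rho_N$, $\sigma:=\sigma_N$. To repair your proof, you need to add this step of fixing one $N$ and relabeling $X_S,Y_S$ accordingly before defining $\rho$ and $\sigma$; the rest of your argument (surjectivity of transition maps, constancy of tops, and the radical-quotient limit computation) then goes through.
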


\begin{proof}
	We maintain the notations from the proof of Proposition \ref{prop:existenceofpro-uniserialsubmodules}. Since $\mathcal{S}$ is finite, we may work within a cofinal inverse system of $N\in \mathcal{N}$ in which $\tn{Fac}(X_S)=\tn{Fac}((X_{S})_{N})$ and $\tn{Fac}(Y_S)=\tn{Fac}((Y_S)_{N})$ for each $S$.

	Fix some $N$ in our cofinal inverse system.  By the finite version of this result \cite[Theorem 11.1.8]{lin2}, there are permutations $\rho_{N},\sigma_{N}$ of $\mathcal{S}=\mathcal{S}_{N}$ satisfying the conclusions of the Lemma with respect to an appropriate renaming of $(X_S)_{N}$ and $(Y_S)_{N}$ as $(U_S)_{N}$ and $(V_S)_{N}$, for each $S$. 
	
	By how we chose $X,Y$ in Proposition \ref{prop:existenceofpro-uniserialsubmodules}, $\varphi_{N}(X_{S})=(X_S)_{N}$. Rename $X_{S}$ by $U_{S}$ if $(X_S)_{N}=(U_S)_{N}$ and by $V_{S}$ if $(X_S)_{N}=(V_S)_{N}$. Rename $Y_{S}$ to be the other one. This renaming is unambiguous unless $X_{S}=Y_{S}=\tn{Soc}(P_{S})$, in which case one may rename arbitrarily.
	
	Define $\rho:=\rho_{N}$ and $\sigma:=\sigma_{N}$. For each $S$, the condition that $\tn{Fac}(U_{S})=\tn{Fac}((U_S)_{N})$ implies that $I_{N}U_{S}\subseteq \tn{Rad}^{|\langle\rho\rangle\cdot S|}(U_{S})$, since otherwise not every isomorphism class of composition factor of $U_{S}$ would appear as a composition factor of $(U_S)_{N}$, and similarly with $V_{S}$. Hence, the first $|\langle\rho\rangle\cdot S|$ composition factors of $U_{S}$ are
	$$\frac{U_{S}}{\tn{Rad}(U_{S})}\cong \rho(S),\ \frac{\tn{Rad}(U_{S})}{\tn{Rad}^{2}(U_{S})}\cong \rho^{2}(S),\hdots ,\  \frac{\tn{Rad}^{|\langle\rho\rangle\cdot S|-1}(U_{S})}{\tn{Rad}^{|\langle\rho\rangle\cdot S|}(U_{S})}\cong \rho^{|\langle\rho\rangle\cdot S|}(S)=S,$$
	and similarly with $V_{S}$, as required.
\end{proof}

For each $N\in\mathcal{N}$, we define the permutations $\rho_{N}:=\rho$ and $\sigma_{N}:=\sigma$ of $\mathcal{S}_{N}=\mathcal{S}$.  By construction, $\rho_{N}$ and $\sigma_{N}$ satisfy \cite[Theorem 11.1.8]{lin2} with respect to the block $B_{N}$ of $G/N$.

\section{Brauer trees and Brauer tree algebras}

We maintain the notation from the previous sections, but reduce further our cofinal set of open normal subgroups: let $\mathcal{N}$ denote a set of open normal subgroups $N$ of $G$ such that $B_{N}$ is a block with cyclic defect group $DN/N$, $N$ acts trivially on each $S\in\mathcal{S}$, and $\tn{Fac}(U_{S})=\tn{Fac}((U_S)_{N})$, $\tn{Fac}(V_{S})=\tn{Fac}((V_S)_{N})$ for every $S\in\mathcal{S}$. If $D$ is a finite, then we demand further that $|D|=|DN/N|$, and if $D$ is $\mathbb{Z}_p$, then we demand further that $(|DN/N|-1)/|\mathcal{S}|>1$. 

We recall the notions of Brauer Trees and Brauer Tree Algebras.  Using the canonical identification $\mathcal{S}=\mathcal{S}_{N}$, $\rho_{N}:=\rho$ and $\sigma_{N}:=\sigma$ for each $N\in\mathcal{N}$, denote by $\Gamma(B_{N})$ the Brauer tree of $B_{N}$, meaning:

$\Gamma(B_{N})$ is a tree with vertices the $\rho$-orbits $\langle\rho\rangle\cdot S$ and the $\sigma$-orbits $\langle\sigma\rangle\cdot S$ of $\mathcal{S}$.  Its edges are the elements of $\mathcal{S}$, and the edge $S$ joins the $\rho$-orbit of $S$ and the $\sigma$-orbit of $S$. Unless $|DN/N|=p$ and $|\mathcal{S}|=p-1$, there is a unique exceptional vertex, to which we attribute the number $m_{N}=\frac{|DN/N|-1}{|\mathcal{S}|}$ (cf. \cite[Theorem 11.1.9]{lin2}).  There is a cyclic ordering $\gamma_{v}$ of the edges around the vertex $v=\langle\rho\rangle\cdot S$ given by:
$$\rho(S)\,,\, \rho^{2}(S)\,,\, \hdots,\, \rho^{|\langle\rho\rangle\cdot S|-1}(S)\,,\, \rho^{|\langle\rho\rangle\cdot S|}(S)=S.$$
There is a cyclic ordering $\gamma_{v}$ of the edges around the vertex $v=\langle\sigma\rangle\cdot S$ given by:
$$\sigma(S)\,,\, \sigma^{2}(S)\,,\, \hdots,\,  \sigma^{|\langle\sigma\rangle\cdot S|-1}(S)\,,\, \rho^{|\langle\sigma\rangle\cdot S|}(S)=S.$$

By \cite[Theorem 5.10.37]{Zim}, $B_{N}$ can be naturally identified with a Brauer tree algebra $A_N$ associated to $\Gamma(B_{N})$, which can be described as follows (we follow \cite[Definition 5.10.4]{Zim}):

\begin{enumerate} 
	\item There is a one-to-one correspondence between the edges of the tree and the simple $A_N$-modules,
	
	\item the top $P/\tn{Rad}(P)$ of each indecomposable projective $A_N$-module $P$ is isomorphic to its socle,
	
	\item the projective cover $P_{S}$ of the simple module corresponding to the edge $S$ is such that
	$$\tn{Rad}(P_{S})/\tn{Soc}(P_{S})\cong U^{v}_N(S)\oplus U^{w}_N(S)$$
	for two (possibly zero) uniserial $A_N$-modules $ U^{v}_N(S)$ and $U^{w}_N(S)$, where $v,w$ are the vertices adjacent to the edge $S$,
	
	\item if $v$ is not the exceptional vertex and if $v$ is adjacent to the edge $S$, then  $U^{v}_N(S)$ has $s(v)-1$ composition factors, where $s(v)$ is the number of edges adjacent to $v$,
	
	\item if $v$ is the exceptional vertex and if $v$ is adjacent to $S$, then $U^{v}_N(S)$ has $m_N\cdot s(v)-1$ composition factors.
	
	\item if $v$ is adjacent to $S$, then the composition factors of $U^{v}_N(S)$ are described as 
	$$\tn{Rad}^{j}(U^{v}_N(S))/\tn{Rad}^{j+1}(U^{v}_N(S))\cong  \gamma^{j+1}_{v}(S),$$
	\noindent for all $j$ smaller than the number of composition factors of $U^{v}_N(S)$.
\end{enumerate}

\begin{lem}\label{lem:BrauertreeGammaB_Ndescription}
	Via the canonical identification $\mathcal{S}=\mathcal{S}_{N}$, the Brauer trees $\Gamma(B_{N})$ are equal for each $N\in\mathcal{N}$, except for the multiplicity $m_{N}$.
\end{lem}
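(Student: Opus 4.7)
The plan is to unpack the definition of $\Gamma(B_N)$ given immediately before the lemma and observe that every piece of structural data is expressed purely in terms of the triple $(\mathcal{S},\rho,\sigma)$, which has already been shown to be independent of $N$.

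By the construction leading up to the lemma, we have the canonical identification $\mathcal{S}=\mathcal{S}_N$ under which $\rho_N=\rho$ and $\sigma_N=\sigma$ for every $N\in\mathcal{N}$. With these identifications, I would simply read off each ingredient of the Brauer tree recipe: the vertex set of $\Gamma(B_N)$ is the disjoint union of the $\rho$-orbits and $\sigma$-orbits on $\mathcal{S}$; the edge set is $\mathcal{S}$ itself; the incidence rule attaches each edge $S$ to the vertices $S^\rho$ and $S^\sigma$; and the cyclic ordering $\gamma_v$ at each vertex $v=S^\rho$ (resp.\ $v=S^\sigma$) is obtained by iterating $\rho$ (resp.\ $\sigma$). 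Each of these depends only on the triple $(\mathcal{S},\rho,\sigma)$, hence is literally the same for every $N\in\mathcal{N}$.

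The only remaining datum is the exceptional vertex together with its attached multiplicity $m_N=(|DN/N|-1)/|\mathcal{S}|$, which manifestly depends on $N$ and is precisely the exception permitted by the statement. For the position of the exceptional vertex I would, if needed, point to Proposition \ref{prop:existenceofpro-uniserialsubmodulesUiVi} and the fact that by the refined choice of $\mathcal{N}$ we have $\tn{Fac}(U_S)=\tn{Fac}((U_S)_N)$ and $\tn{Fac}(V_S)=\tn{Fac}((V_S)_N)$, so the distinction between exceptional and non-exceptional vertices comes from $B$ itself rather than from $N$. I do not foresee a genuine obstacle: all the essential work was done in Proposition \ref{prop:existenceofpro-uniserialsubmodulesUiVi} and in the successive refinements of $\mathcal{N}$, and the lemma is at this point a bookkeeping exercise confirming that the recipe for $\Gamma(B_N)$ is $N$-independent.
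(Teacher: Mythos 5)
Your observation that the combinatorial data of $\Gamma(B_N)$ (vertices, edges, incidence, cyclic orderings) is entirely determined by the triple $(\mathcal{S},\rho,\sigma)$, which is $N$-independent by construction, matches the first sentence of the paper's proof and is fine. The gap is in your treatment of the position of the exceptional vertex: the ``$\text{Fac}$'' refinement of $\mathcal{N}$ does not do the job you ask of it. The condition $\tn{Fac}(U_S)=\tn{Fac}((U_S)_N)$ controls which isomorphism classes of simples occur as composition factors; it says nothing about how many composition factors occur (counted with multiplicity). The exceptional vertex of $\Gamma(B_N)$ is the vertex $v$ for which $U^v_N(S)$ has \emph{strictly more} than $s(v)-1$ composition factors (namely $m_N\cdot s(v)-1$), and a priori that vertex could move as $N$ changes without the $\text{Fac}$ sets changing at all, since the extra factors simply repeat the same orbit. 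So ``the distinction comes from $B$ itself'' is not yet an argument; it is the conclusion you need to prove.

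The missing step is the surjectivity of the transition maps on the relevant uniserial pieces. For $N\leqslant M$ the map $\varphi_{MN}$ restricts to a surjection $U^v_N(S)\twoheadrightarrow U^v_M(S)$ (this is one of the two bullet-point observations following Proposition~\ref{prop:existenceofpro-uniserialsubmodules}). Hence if $v$ is exceptional in $\Gamma(B_M)$, i.e.\ $U^v_M(S)$ has more than $s(v)-1$ composition factors, then so does $U^v_N(S)$, forcing $v$ to be exceptional in $\Gamma(B_N)$ as well; there is at most one exceptional vertex, so it cannot have moved. Also note that before that you need to argue that either all the $\Gamma(B_N)$ have an exceptional vertex or none does, which again follows from the refined choice of $\mathcal{N}$ (if $D$ is finite then $|DN/N|$ is fixed, and if $D$ is infinite then $|DN/N|>p$ for all $N$ in a cofinal subsystem). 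Once you replace the $\text{Fac}$ hand-wave with this surjectivity argument, the proposal matches the paper's proof.
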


\begin{proof}
	Ignoring the exceptional vertex, the Brauer trees are clearly the same for each $N$, being completely determined by $\rho_{N} = \rho, \sigma_{N} = \sigma$ and $\mathcal{S}_{N}=\mathcal{S}$.  We need only check that the exceptional vertex (if it exists) is the same in each tree.  Our conditions on $\mathcal{N}$ imply that either no $\Gamma(B_{N})$ has an exceptional vertex, in which case there is nothing to check, or they all do.  Fix $N\leqslant M$ in $\mathcal{N}$ and let $v$ be the exceptional vertex of $\Gamma(B_M)$.   By the description above, the modules $U^{v}_{M}(S)$ are the only ones having strictly more than the size of the corresponding orbit composition factors. But since, via the natural identifications between $B_N$ and $A_N$, and between $B_M$ and $A_M$, $U^{v}_{N}(S)$ surjects onto $U^{v}_{M}(S)$, then $U^{v}_{N}(S)$ also has more than the size of the corresponding orbit composition factors.  Hence $v$ is also the exceptional vertex of $\Gamma(B_N)$, as required.
\end{proof}

\begin{defi}\label{def:brauertreeofB}
	Define the \emph{Brauer tree} of $B$ to be $\Gamma(B):=\Gamma(B_{N})$, for any $N\in\mathcal{N}$, except for the multiplicity $m$ of the exceptional vertex, which is defined to be $\frac{|D|-1}{|\mathcal{S}|}$ if $D$ is finite, or $\infty$ if $D$ is infinite.
\end{defi}

The next theorem is a pseudocompact version of \cite[Theorem 5.10.37]{Zim}:

\begin{teo}\label{teo:BraTreeAlg}
	Let $B$ be a block of a profinite group $G$ with cyclic defect group $D$. Then $B$ can be naturally identified with a Brauer tree algebra $A$ associated to $\Gamma(B)$, which can be described as follows:
	\begin{enumerate} 
		\item\label{teo:BrTreeAlgSIMPLESAREEDGES} There is a one-to-one correspondence between the edges of $\Gamma(B)$ and the simple $A$-modules.
		
		\item\label{teo:BrTreeAlgSOCLE} if the edge $S$ is adjacent to an exceptional vertex of infinite multiplicity, then the socle of the projective cover $P_S$ of the corresponding simple module is $0$.  Otherwise, $\tn{Soc}(P_S)\iso S$,
		
		\item\label{teo:BrTreeAlgRADOVERSOCTWOUNIS} 
		for each edge $S$ we have
		$$\tn{Rad}(P_S)/\tn{Soc}(P_{S})\cong U^{v}(S)\oplus U^{w}(S)$$
		for two (possibly zero) pro-uniserial modules $U^{v}(S)$ and $U^{w}(S)$, where $v,w$ are the vertices adjacent to the edge $S$,
		
		\item\label{teo:BrTreeAlgNUMFACSNONEXCEP} if $v$ is not the exceptional vertex and if $v$ is adjacent to the edge $S$ then $U^{v}(S)$ has $s(v)-1$ composition factors, where $s(v)$ is the number of edges adjacent to $v$,

		\item\label{teo:BrTreeAlgNUMFACSSEXCEP} if $v$ is the exceptional vertex with multiplicity $m$, and if $v$ is adjacent to $S$, then $U^{v}(S)$ has $m\cdot s(v)-1$ composition factors if $m$ is finite, or infinitely many  if $m=\infty$.
		
		\item\label{teo:BrTreeAlgCOMPFACS} if $v$ is adjacent to $S$ then the composition factors of $U^{v}(S)$ are described as 
		$$\tn{Rad}^{j}(U^{v}(S))/\tn{Rad}^{j+1}(U^{v}(S))\cong  \gamma_{v}^{j+1}(S),$$
		for all $j$ as long as $j$ is smaller than the number of composition factors of $U^{v}(S)$.
	\end{enumerate}
\end{teo}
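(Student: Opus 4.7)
The plan is to leverage the decomposition $B = \varprojlim_{N \in \mathcal{N}} B_N$, with each $B_N$ canonically isomorphic to the Brauer tree algebra $A_N$ of $\Gamma(B_N)$ by \cite[Theorem 5.10.37]{Zim}. For each $S \in \mathcal{S}$ I would let $v,w$ denote the vertices $S^\rho$ and $S^\sigma$ adjacent to $S$ and define $U^v(S) := U_S/\tn{Soc}(P_S)$, $U^w(S) := V_S/\tn{Soc}(P_S)$ via the pro-uniserial submodules of $P_S$ produced by Proposition \ref{prop:existenceofpro-uniserialsubmodulesUiVi}. The bulk of the proof is then to verify properties \ref{teo:BrTreeAlgSIMPLESAREEDGES}--\ref{teo:BrTreeAlgCOMPFACS} by taking inverse limits of the analogous properties of the $B_N$.

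Properties \ref{teo:BrTreeAlgSIMPLESAREEDGES}, \ref{teo:BrTreeAlgRADOVERSOCTWOUNIS}, \ref{teo:BrTreeAlgNUMFACSNONEXCEP}, \ref{teo:BrTreeAlgCOMPFACS} and the finite-multiplicity case of \ref{teo:BrTreeAlgNUMFACSSEXCEP} should all fall out of the setup constructed in the previous section. Property \ref{teo:BrTreeAlgSIMPLESAREEDGES} is immediate from the canonical identification $\mathcal{S}=\mathcal{S}_N$ and Lemma \ref{lem:BrauertreeGammaB_Ndescription}. Property \ref{teo:BrTreeAlgRADOVERSOCTWOUNIS} is the content of Proposition \ref{prop:existenceofpro-uniserialsubmodules} combined with the above definitions of $U^v(S),U^w(S)$. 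For properties \ref{teo:BrTreeAlgNUMFACSNONEXCEP}, \ref{teo:BrTreeAlgCOMPFACS}, and the finite-multiplicity case of \ref{teo:BrTreeAlgNUMFACSSEXCEP}, each $U^v_N(S)$ is uniserial with the prescribed number and ordering $\gamma_v^{j+1}(S)$ of composition factors by Proposition \ref{prop:existenceofpro-uniserialsubmodulesUiVi}; in the finite-multiplicity case the restrictions on $\mathcal{N}$ force $m_N = m$ for all $N$, so the counts $s(v)-1$ and $m\cdot s(v)-1$ are already correct at every level and pass to the limit by Lemma \ref{lem:charactofradicalofamodule}.

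The main technical obstacle I expect lies in property \ref{teo:BrTreeAlgSOCLE} together with the infinite-multiplicity case of property \ref{teo:BrTreeAlgNUMFACSSEXCEP}. By Lemma \ref{lem:charactofsocleofamodule}, $\tn{Soc}(P_S) = \varprojlim_N \tn{Soc}(P_{S,N})$, with each $\tn{Soc}(P_{S,N}) \iso S$ sitting at the bottom of the uniserial module $X_{S,N}$ at radical depth $\ell_N - 1$, where $\ell_N$ denotes the length of $X_{S,N}$. The surjection $X_{S,N} \onto X_{S,M}$ (established in the proof of Proposition \ref{prop:existenceofpro-uniserialsubmodules}) has kernel $\tn{Rad}^{\ell_M}(X_{S,N})$, so the restriction $\tn{Soc}(P_{S,N}) \to \tn{Soc}(P_{S,M})$ is zero whenever $\ell_N > \ell_M$. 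When $S$ is adjacent to an exceptional vertex $v$ of infinite multiplicity, $\ell_N = m_N s(v)$ grows without bound and these transitions are cofinally zero, forcing $\tn{Soc}(P_S)=0$; in all other cases $\ell_N$ is eventually constant, the transitions are eventually isomorphisms, and $\tn{Soc}(P_S) \iso S$. The infinite-multiplicity case of property \ref{teo:BrTreeAlgNUMFACSSEXCEP} then follows in the same spirit: for each fixed $j\geq 0$, the layer $\tn{Rad}^j(U^v(S))/\tn{Rad}^{j+1}(U^v(S))$ is the limit of layers that stabilize as $\gamma_v^{j+1}(S)$ once $N$ is small enough that $m_N s(v) - 1 > j$, so infinitely many radical layers are non-zero simples.

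To conclude, the canonical algebra isomorphism $B \iso A$ would be obtained by defining $A := \varprojlim_N A_N$ in the category of pseudocompact algebras, with transition maps induced by the chain $A_N \iso B_N \to B_M \iso A_M$; the resulting limit $A$ inherits the structural properties \ref{teo:BrTreeAlgSIMPLESAREEDGES}--\ref{teo:BrTreeAlgCOMPFACS} from the previous paragraphs and is thus reasonably called the Brauer tree algebra of $\Gamma(B)$. Lemma \ref{lem:multiplicityofPi} ensures that the finite multiplicities with which each $P_S$ appears in $B$ match those in $A$, so the decompositions of the two pseudocompact algebras as products of indecomposable projective modules agree, yielding the claimed isomorphism.
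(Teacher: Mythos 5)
Your proposal is correct and follows essentially the same strategy as the paper: reduce to the finite Brauer tree algebra structure of each $B_N$ and pass to the limit, using the stabilization of radical layers for the finite-multiplicity cases and the unbounded growth of $m_N$ for the infinite-multiplicity case. The only noteworthy (but minor) difference is in Property \ref{teo:BrTreeAlgSOCLE}: you track the lengths $\ell_N$ of the uniserial submodules $X_{S,N}$ and observe directly that the transition maps on socles are zero when $\ell_N > \ell_M$, whereas the paper first observes that in the ``otherwise'' case $P_S$ is finite-dimensional (by Properties 3--5), identifies it with some $P_{S,N}$, and then invokes the finite result $\tn{Soc}(P_{S,N})\iso S$; both arguments are valid and hinge on the same surjectivity and length observations from Proposition \ref{prop:existenceofpro-uniserialsubmodules}.
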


\begin{proof}
	Property \ref{teo:BrTreeAlgSIMPLESAREEDGES} follows by the construction of $\Gamma(B)$. Property \ref{teo:BrTreeAlgRADOVERSOCTWOUNIS} follows from the construction of $\Gamma(B)$ and Proposition \ref{prop:existenceofpro-uniserialsubmodules}.
	
	Let $v$ be a non-exceptional vertex of $\Gamma(B)$. For each edge $S$ and each $N$, the module $U^v_N(S)$ has $s(v)-1$ composition factors and hence, since the maps $U^v_N(S)\to U^v_M(S)$ are surjective whenever $N\leqslant M$, then $U^v(S)$ also has $s(v)-1$ composition factors, confirming Property \ref{teo:BrTreeAlgNUMFACSNONEXCEP}.
	
	If $v$ is the exceptional vertex of $\Gamma(B)$ and the multiplicity $m$ is finite, then there is $N'\in\mathcal{N}$ such that $I_{N'}U^{v}(S)\subseteq \tn{Rad}^{n}(U^{v}(S))$, where $n=m\cdot s(v)$, because $\tn{Rad}^n(U^v(S)$ is open in $U^v(S)$. Since for each $N\leq N'$, the module $U_{N}^{v}(S)$ has $m\cdot s(v)-1$ composition factors, then $U^{v}(S)$ has $m\cdot s(v)-1$ composition factors.  If $v$ is the exceptional vertex and $m$ is infinite, then for each $n\in\NN$, there is $N\in\mathcal{N}$ such that $m_{N}\cdot s(v)-1 > n$. Since $U^{v}(S)$ surjects onto $U_{N}^{v}(S)$, it follows that $U^{v}(S)$ has at least $n$ composition factors for every $n$, and hence it has infinitely many.  These checks confirm Property \ref{teo:BrTreeAlgNUMFACSSEXCEP}.
	
	We may now check Property \ref{teo:BrTreeAlgSOCLE}.   Fix an edge $S$ and its projective cover $P = P_S$.  Properties \ref{teo:BrTreeAlgRADOVERSOCTWOUNIS}, \ref{teo:BrTreeAlgNUMFACSNONEXCEP} and \ref{teo:BrTreeAlgNUMFACSSEXCEP}, together with that fact that $\tn{Soc}(P) = \varprojlim \tn{Soc}({P}_N)$ has dimension at most $1$, imply that if $S$ is not connected to a vertex of infinite multiplicity, then $P$ is finite dimensional, and hence is isomorphic to ${P}_N$ for some $N\in \mathcal{N}$.  So by the finite result, $\tn{Soc}(P_S) \iso S$.  On the other hand, if $S$ is connected to a vertex of infinite multiplicity, then the map $\varphi_{MN}:P_N\to P_M$ restricted to $\tn{Soc}(P_N)$ is the zero map whenever $|DN/N|>|DM/M|$, and hence $\tn{Soc}(P) = \varprojlim \tn{Soc}(P_N) = 0$.
	
	It remains to check Property \ref{teo:BrTreeAlgCOMPFACS}, so fix $j$ smaller than the number of composition factors of $U^v(S)$ and some $n$ larger than $j$.  Since $\tn{Rad}^n(U^v(S))$ is open in $U^v(S)$, there is $N\in \mathcal{N}$ such that $I_NU^v(S)\subseteq \tn{Rad}^n(U^v(S))$.  For this $N$ we have
	\begin{align*}
		\tn{Rad}^{j}(U^{v}(S))/\tn{Rad}^{j+1}(U^{v}(S)) & \iso \tn{Rad}^{j}(U^{v}(S)_N)/\tn{Rad}^{j+1}(U^{v}(S)_N) \\
		& \iso 
		\tn{Rad}^{j}(U^{v}_N(S))/\tn{Rad}^{j+1}(U^{v}_N(S)) \\
		& \iso \gamma_v^{j+1}(S).
	\end{align*}
\end{proof}

\begin{teo}\label{teo:grpdeffin}
	Let $B$ be a block of a profinite group $G$ with cyclic defect group $D$. Then $D$ is a finite group if, and only if, $\tn{dim}_k(B)<\infty$.
\end{teo}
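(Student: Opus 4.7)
The strategy is to apply Theorem \ref{teo:BraTreeAlg} and reduce the dimension question to a count of composition factors of the pro-uniserial summands $U^{v}(S)$. I would first dispose of the trivial case $D=1$ separately: such a block is simple and hence a matrix algebra over $k$, in particular finite dimensional, while $D$ is obviously finite. So from now on assume $D$ is non-trivial, so that the entire setup of Sections 3 and 4 applies.

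Since $B$ decomposes as a direct sum $\bigoplus_{S\in\mathcal{S}}P_{S}^{n_{S}}$ with $\mathcal{S}$ finite (Lemma \ref{lem:Ifinite}) and each multiplicity $n_{S}$ finite (Lemma \ref{lem:multiplicityofPi}), we have $\tn{dim}_{k}(B)<\infty$ if and only if $\tn{dim}_{k}(P_{S})<\infty$ for every $S\in\mathcal{S}$. Thus the statement reduces to controlling the dimension of each indecomposable projective $P_S$.

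For the ``if'' direction, assume $D$ is finite. Then the multiplicity $m=(|D|-1)/|\mathcal{S}|$ at the exceptional vertex is finite. Applying parts \ref{teo:BrTreeAlgSOCLE}, \ref{teo:BrTreeAlgRADOVERSOCTWOUNIS}, \ref{teo:BrTreeAlgNUMFACSNONEXCEP} and \ref{teo:BrTreeAlgNUMFACSSEXCEP} of Theorem \ref{teo:BraTreeAlg}, each $P_{S}$ has one-dimensional top and socle, and $\tn{Rad}(P_{S})/\tn{Soc}(P_{S})\cong U^{v}(S)\oplus U^{w}(S)$ with each uniserial summand having finitely many composition factors. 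Hence $\tn{dim}_{k}(P_{S})<\infty$ and so $\tn{dim}_{k}(B)<\infty$. The converse is best argued contrapositively: suppose $D$ is infinite, so $D\cong\ZZ_{p}$. By Definition \ref{def:brauertreeofB}, $\Gamma(B)$ has an exceptional vertex $v$ of multiplicity $m=\infty$. Since $\mathcal{S}\neq\emptyset$ and $\Gamma(B)$ is a connected tree, there is at least one edge $S\in\mathcal{S}$ adjacent to $v$. By part \ref{teo:BrTreeAlgNUMFACSSEXCEP} of Theorem \ref{teo:BraTreeAlg}, the pro-uniserial module $U^{v}(S)$ has infinitely many composition factors, so $\tn{dim}_{k}(U^{v}(S))=\infty$. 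Since $U^{v}(S)$ is a subquotient of $P_{S}$ and $P_{S}$ is a direct summand of $B$, we obtain $\tn{dim}_{k}(B)=\infty$, as desired.

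With Theorem \ref{teo:BraTreeAlg} already available, this is essentially a bookkeeping exercise, so I do not expect a significant obstacle. The only points that require a moment's thought are verifying in the infinite-defect case that the exceptional vertex actually carries an edge (guaranteed because $B$ admits at least one simple module and $\Gamma(B)$ is connected) and that $U^{v}(S)$ genuinely forces $P_{S}$ to be infinite dimensional; both are immediate from the Brauer tree description.
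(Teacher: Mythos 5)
Your proof is correct and follows essentially the same route as the paper: reduce to the indecomposable projectives via Lemmas \ref{lem:Ifinite} and \ref{lem:multiplicityofPi}, then read off finiteness of each $P_S$ from the Brauer tree description in Theorem \ref{teo:BraTreeAlg}. You simply spell out the bookkeeping that the paper compresses into a single sentence, and you also handle the trivial defect case, which is already excluded by the section's standing hypothesis that $D$ is non-trivial.
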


\begin{proof}
	By Lemma \ref{lem:multiplicityofPi}, $B$ is a finite direct sum of indecomposable projective modules, so that $B$ has finite dimension if, and only if, each indecomposable projective $B$-module is finite dimensional.  But by Theorem \ref{teo:BraTreeAlg} this is the case if, and only if, $D$ is finite.
\end{proof}

\section{Blocks with defect group $\mathbb{Z}_p$}

Theorem \ref{teo:grpdeffin} says that when the cyclic defect group $D$ of a block $B$ is finite, then $B$ is finite dimensional, and hence the blocks of a profinite group with finite cyclic defect group are precisely the blocks of finite groups with cyclic defect group.  It remains to study the case $D = \mathbb{Z}_p$.  We will show that there are very few possibilities for such blocks and that they are extremely well behaved.

A Brauer tree $\Gamma$ is of \emph{star type} if every edge of $\Gamma$ is connected to the exceptional vertex, as in the following diagram, wherein the exceptional vertex is black and the non-exceptional vertices are white, and the cyclic ordering around a vertex is given by reading the edges in counter-clockwise order:
\begin{center}
	\definecolor{ffffff}{rgb}{1,1,1}
	\definecolor{uuuuuu}{rgb}{0.26666666666666666,0.26666666666666666,0.26666666666666666}
	\begin{tikzpicture}[line cap=round,line join=round,>=triangle 45,x=1cm,y=1cm]
		\draw [line width=2pt] (0,0)-- (-1.48,1.48) node[midway, below left] {$S_n$};
		\draw [line width=2pt] (0,0)-- (1,2) node[midway, right] {$S_2$};
		\draw [line width=2pt] (0,0)-- (2.02,0.72) node[midway, below right] {$S_1$};
		\draw [shift={(0,0)},line width=2pt,dash pattern=on 6pt off 8pt]  plot[domain=1.1071487177940904:2.356194490192345,variable=\t]({1*1.404250689869868*cos(\t r)+0*1.404250689869868*sin(\t r)},{0*1.404250689869868*cos(\t r)+1*1.404250689869868*sin(\t r)});
		\begin{scriptsize}
			\draw [fill=uuuuuu] (0,0) circle (2pt);
			\draw[color=uuuuuu] (-0.25,-0.345) ;
			\draw [fill=ffffff] (-1.48,1.48) circle (2.5pt);
			\draw [fill=ffffff] (1,2) circle (2.5pt);
			\draw [fill=ffffff] (2.02,0.72) circle (2.5pt);
		\end{scriptsize}
	\end{tikzpicture}
\end{center}

\begin{example}\label{ex: kZp}
	The simplest example of a block with defect group $\mathbb{Z}_p$ is $k\db{\mathbb{Z}_p}$ itself.  This algebra has only the trivial simple module $k$, and its Brauer tree is 
	\begin{center}
		\definecolor{ffffff}{rgb}{1,1,1}
		$$\begin{tikzpicture}[line cap=round,line join=round,>=triangle 45,x=1cm,y=1cm]
			\draw [line width=2pt] (-5,1) node[below left] {$\infty$} -- (-2,1) node[midway, below] {$k$};
			\begin{scriptsize}
				\draw [fill=black] (-5,1) circle (2.5pt);
				\draw [fill=ffffff] (-2,1) circle (2.5pt);
			\end{scriptsize}
		\end{tikzpicture}$$
	\end{center}
\end{example}

\begin{teo}\label{teo:blocktreestar}
	Let $B$ be a block of a profinite group $G$ with infinite cyclic defect group $D$. Then $\Gamma(B)$ is of star type with exceptional vertex of multiplicity $\infty$.
\end{teo}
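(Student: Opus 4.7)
The multiplicity of the exceptional vertex is $\infty$ by Definition \ref{def:brauertreeofB}, since $D\iso \mathbb{Z}_p$ is infinite; the substantive content is the star shape. I will argue by contradiction: suppose some edge $S\in\mathcal{S}$ of $\Gamma(B)$ has both endpoints $v, w$ non-exceptional. By the Brauer tree algebra description of each finite block $B_N$ (i.e.\ the analogue for $B_N$ of Properties \ref{teo:BrTreeAlgSOCLE}--\ref{teo:BrTreeAlgNUMFACSNONEXCEP} and \ref{teo:BrTreeAlgCOMPFACS} of Theorem \ref{teo:BraTreeAlg}), the projective cover $P_{S,N}$ in $B_N$ has the simple $S$ as both its top and socle, together with the $s(v)-1$ composition factors of $U^v_N(S)$ and the $s(w)-1$ composition factors of $U^w_N(S)$. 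Under the canonical identification $\mathcal{S} = \mathcal{S}_N$, these simple modules and their $k$-dimensions do not depend on $N$, so $\dim_k P_{S,N} = d$ is a fixed positive integer.

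The key step is a Sylow-style divisibility argument: for each $N\in\mathcal{N}$, $DN/N$ is a finite $p$-subgroup of $G/N$, and $k[G/N]$ is a free $k[DN/N]$-module upon restriction. Since $k[DN/N]$ is a local ring (so every projective $k[DN/N]$-module is free), every projective $k[G/N]$-module is free as a $k[DN/N]$-module. Applied to the projective indecomposable $P_{S,N}$ of $B_N\subseteq k[G/N]$, this yields $|DN/N| \mid \dim_k P_{S,N} = d$.

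To close, because $D\iso \mathbb{Z}_p$ is infinite and $\bigcap_{N\in\mathcal{N}} N = 1$, the orders $|DN/N| = |D/(D\cap N)|$ can be made arbitrarily large by shrinking $N$ in $\mathcal{N}$. For any such $N$ with $|DN/N| > d$ the divisibility $|DN/N|\mid d$ fails, a contradiction. Hence no edge of $\Gamma(B)$ can have both endpoints non-exceptional, and $\Gamma(B)$ is of star type. The only point that requires attention is the $N$-independence of $\dim_k P_{S,N}$, which falls out cleanly from the Brauer tree algebra structure shared by the $B_N$; the divisibility and the unbounded growth of $|DN/N|$ are then entirely routine.
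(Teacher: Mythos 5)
Your proposal is correct but takes a genuinely different route from the paper's proof. The paper negates ``star type'' at an edge $S$ \emph{adjacent} to the exceptional vertex $v$: since $m=\infty$, the transition map $\varphi_{MN}$ on the $v$-side uniserial submodule $X^v_N$ of $P_N$ eventually fails to be injective, hence annihilates $\tn{Soc}(P_N)$; but on the $w$-side (where $w$ is non-exceptional of degree at least two) $\varphi_{MN}$ restricts to an isomorphism $X^w_N\to X^w_M$ and so cannot kill that same socle --- a contradiction, as both uniserial pieces share $\tn{Soc}(P_N)$. You instead pick an edge $S$ \emph{not} adjacent to the exceptional vertex, note that $\dim_k P_{S,N}$ is then a fixed positive integer $d$ (since neither endpoint is exceptional and simples and composition lengths are $N$-independent under the identification $\mathcal{S}=\mathcal{S}_N$), and invoke the classical fact that a projective $k[G/N]$-module restricts to a free $k[DN/N]$-module, forcing $|DN/N|\mid d$, which is impossible as $|DN/N|$ is unbounded when $D\iso\ZZ_p$. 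Your route replaces the paper's inverse-system socle analysis with a single dimension-and-divisibility constraint and is arguably more elementary and self-contained; the paper's version stays inside the module structure of a single projective and exhibits concretely which map must misbehave. Both arguments are valid.
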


\begin{proof}
    Let $P$ be a non-zero projective $k\db{G}$-module.  The module $P\res{D}$ is thus a non-zero projective $k\db{D}$-module (projectivity can be seen by applying \cite[Corollary 3.3]{Bru} to the algebras $k[DN/N]$ and projective modules $k[G/N]\res{DN/N}$).  But since $D$ is a pro-$p$ group, projective modules are free (as can be seen by taking limits), so that $P\res{D}$ is isomorphic to some product of copies of $k\db{D}$ and in particular $P$ has infinite dimension because $D$ is infinite.  If $\Gamma(B)$ is not of star-type, then it has an edge not connected to the exceptional vertex.  But now by definition of the Brauer tree algebra, the projective cover of the corresponding simple $B$-module is finite dimensional, contradicting the above observation.
\end{proof}

Theorem \ref{teo:blocktreestar} tells us that there are very few Morita equivalence classes of blocks with defect group $\mathbb{Z}_p$.  We finish by observing briefly that they are extremely well-behaved as algebras: they are examples of what one might call a pseudocompact Nakayama algebra (cf.\ \cite{Nakayama}), they have global dimension 1 (cf.\ \cite[\S 3]{Bru}), and they are Morita equivalent (cf.\  \cite{GabrielIndecs2}) to completed path algebras (\cite[Definition 2.2]{DerksenWeymanZelevinsky}) of finite quivers with a very simple form: 

\begin{cor}\label{cor:props of blocks with def Zp}
	Let $B$ be a block of a profinite group, having defect group $\mathbb{Z}_p$.  Then
	\begin{enumerate}
		\item The indecomposable projective $B$-modules are pro-uniserial.
		\item $B$ has global dimension $1$.
		\item If $B$ has $n$ simple modules, then $B$ is Morita equivalent to the completed path algebra $k\db{Q}$, where $Q$ is an oriented cycle of length $n$.
	\end{enumerate}
\end{cor}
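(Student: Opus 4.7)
The plan is to establish the three parts in order, using the Brauer-tree-algebra structure from Theorems \ref{teo:BraTreeAlg} and \ref{teo:blocktreestar} throughout. Fix notation: $\Gamma(B)$ is a star with central exceptional vertex $v$ of multiplicity $\infty$ and $n$ leaves, so the cyclic ordering $\gamma_v$ around $v$ induces a cyclic permutation $\rho$ of $\mathcal{S}$ of order $n$.

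For Part 1, I would apply Theorem \ref{teo:BraTreeAlg} directly: for each simple $S$ with leaf vertex $w$, the condition $s(w) = 1$ gives $U^w(S) = 0$ (Property \ref{teo:BrTreeAlgNUMFACSNONEXCEP}), while Property \ref{teo:BrTreeAlgSOCLE} gives $\tn{Soc}(P_S) = 0$. Hence $\tn{Rad}(P_S) = U^v(S)$ is pro-uniserial by construction (Proposition \ref{prop:existenceofpro-uniserialsubmodules}), and $P_S$ has simple top with pro-uniserial radical. Running the same analysis in each $B_N$ (whose Brauer tree is also a star by Lemma \ref{lem:BrauertreeGammaB_Ndescription}) shows $(P_S)_N$ is uniserial, and so $P_S$ is pro-uniserial.

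For Part 2, it suffices to prove $\tn{Rad}(P_S) \cong P_{\rho(S)}$ for each simple $S$; this yields the projective resolution $0 \to P_{\rho(S)} \to P_S \to S \to 0$ and hence $\tn{gl.dim}\,B = 1$. Both $\tn{Rad}(P_S)$ and $P_{\rho(S)}$ are pro-uniserial with top $\rho(S)$ (Property \ref{teo:BrTreeAlgCOMPFACS}), so projectivity of $P_{\rho(S)}$ yields a surjection $f \colon P_{\rho(S)} \twoheadrightarrow \tn{Rad}(P_S)$. I would compute $\ker(f)$ via $N$-coinvariants: on each $B_N$, the induced map $f_N$ sends $(P_{\rho(S)})_N$ (uniserial of length $m_N n + 1$) onto $\tn{Rad}((P_S)_N)$ (uniserial of length $m_N n$) with matching composition factors from the top, so $\ker(f_N)$ is the bottom composition factor $\tn{Soc}((P_{\rho(S)})_N)$. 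Exactness of inverse limits in the pseudocompact category, together with Lemma \ref{lem:charactofsocleofamodule} and Property \ref{teo:BrTreeAlgSOCLE}, then yields $\ker(f) = \varprojlim_N \tn{Soc}((P_{\rho(S)})_N) = \tn{Soc}(P_{\rho(S)}) = 0$.

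For Part 3, Part 2 computes the Ext-quiver of $B$: $\dim_k \tn{Ext}^1(S,T) = 1$ if $T = \rho(S)$ and $0$ otherwise, so the Ext-quiver is the oriented cycle $Q$ of length $n$. By the pseudocompact version of Gabriel's theorem (\cite{GabrielIndecs2}), $B$ is Morita equivalent to $k\db{Q}/I$ for some admissible ideal $I$; since admissible relations force higher $\tn{Ext}^{\geq 2}$, the condition $\tn{gl.dim}\,B = 1$ forces $I = 0$, giving that $B$ is Morita equivalent to $k\db{Q}$. The main obstacle I anticipate is the inverse-limit step in Part 2 --- that $\ker$ commutes with $\varprojlim$ --- which relies on exactness of inverse limits in the pseudocompact category together with the identification $\tn{Rad}(P_S)_N \cong \tn{Rad}((P_S)_N)$ from Lemma \ref{lem:charactofradicalofamodule}; an alternative to Gabriel's theorem in Part 3 would be to construct the Morita equivalence explicitly by identifying $\tn{End}_B(\bigoplus_{S\in\mathcal{S}} P_S)^{op}$ with $k\db{Q}$ through the inclusions $\alpha_S \colon P_{\rho(S)} \hookrightarrow P_S$ coming from Part 2.
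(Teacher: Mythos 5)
Your argument is correct and follows essentially the same structure as the paper's proof: Part 1 is read off from Theorems \ref{teo:BraTreeAlg} and \ref{teo:blocktreestar}, Part 2 reduces to showing $\tn{Rad}(P_S)\cong P_{\rho(S)}$ so that $0\to P_{\rho(S)}\to P_S\to S\to 0$ is a projective resolution, and Part 3 reads the quiver off from Part 2. Your coinvariant-level computation of $\ker(f)$ supplies detail that the paper leaves implicit (it simply asserts $K\cong P_{S_2}$ from the matching composition series), and invoking the pseudocompact Gabriel presentation together with global dimension $1$ to rule out relations is a reasonable fleshing-out of the paper's terse ``this argument also shows that the quiver\ldots is an oriented cycle.''
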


\begin{proof}
	Part 1 is immediate from Theorem \ref{teo:blocktreestar}.  Suppose the Brauer tree of $B$ has the form given in the diagram before Example \ref{ex: kZp}.  Using the symmetry of the Brauer tree, in order to check that $B$ has global dimension $1$, it is sufficient to check that the kernel $K$ of the projective cover $P_{S_1}\to S_1$ is projective.  But $K$ is pro-uniserial and its composition factors, starting at the top, are $S_2, S_3,\hdots, S_n, S_1, S_2, \hdots$, so that $K$ is isomorphic to $P_{S_2}$.  This argument also shows that the quiver of the corresponding completed path algebra is an oriented cycle.
\end{proof}

\bibliographystyle{abbrv}
\bibliography{refpap}

\end{document}